\newtheorem{theorem}{Theorem}
\theoremstyle{plain}
\newtheorem{lemma}[theorem]{Lemma}
\newtheorem{remark}[theorem]{Remark}
\numberwithin{equation}{section} \numberwithin{theorem}{section}
\newcommand{\R}{\ensuremath{\mathbb{R}}}
\newcommand{\E}{\ensuremath{\mathbb{E}}}
\newcommand{\Var}{\ensuremath{\mathbb{V}}}
\newcommand{\M}{\ensuremath{\mathbb{M}}}
\newcommand{\var}{\Var}
\newcommand{\Ord}{{\mathcal{O}} }
\newcommand{\N}{{\mathbb{N}}}
\newcommand{\Comment}[1]{}
\newcommand{\eps} {\varepsilon}
\renewcommand{\i}{\ifmmode\mathit{\mathchar"7010 }\else\char"10 \fi}
\renewcommand{\j}{\ifmmode\mathit{\mathchar"7011 }\else\char"11 \fi}
\newcommand{\Dx}{\Delta x}
\newcommand{\Dt}{\Delta t^n}
\newcommand{\cF}{\mathcal{F}}
\newcommand{\cN}{\mathcal{N}}
\newcommand{\rR}{\mathds{R}}
\def\e{{\mathrm{e}}}
\begin{document}

\title[Uncertainty quantification for hyperbolic problems]{Uncertainty quantification for linear hyperbolic equations with stochastic process or random field coefficients.}

\author[A. Barth]{Andrea Barth}
  \address[Andrea Barth]{\newline SimTech, University of Stuttgart\newline Pfaffenwaldring 5a\newline70569 Stuttgart}
  \email[]{andrea.barth\@@mathematik.uni-stuttgart.de}

\author[F. G. Fuchs]{Franz G. Fuchs} \address[Franz Georg Fuchs]{\newline Sintef ICT
  \newline Forskningsveien 1\newline
  N--0314 Oslo, Norway} \email[]{franzgeorgfuchs@gmail.com}
%\urladdr{http://folk.uio.no/franzf/}

% \date{\today}

\begin{abstract}
    In this paper hyperbolic partial differential equations with random coefficients are discussed. Such random partial differential equations appear for instance in traffic flow problems as well as in many physical processes in random media.
    Two types of models are presented: The first has a time-dependent coefficient modeled by the Ornstein--Uhlenbeck process. The second has a random field coefficient with a given covariance in space.
    For the former a formula for the exact solution in terms of moments is derived.
    In both cases stable numerical schemes are introduced to solve these random partial differential equations. Simulation results including convergence studies conclude the theoretical findings.
\end{abstract}

\maketitle

\smallskip
\noindent \textbf{Key words.} stochastic partial differential equation, Monte Carlo method, random advection equation, finite difference/volume schemes, uncertainty quantification, stochastic coefficient, Ornstein--Uhlenbeck process

\section{Introduction}

Hyperbolic partial differential equations with random data have been an active research field over the last decades. In ample situations measurements are not accurate enough to allow an exact description of a physical phenomena by a deterministic model.  
%
%One way of introducing uncertainty into partial differential equations is by adding a stochastic source term, or via stochastic initial conditions. This paper however focuses on uncertainty which comes from the model parameters.
%
%Therefore, uncertainty is 
Uncertainty may then be introduced in the appropriate parameters and the distribution of the (now stochastic) solutions is studied. % and its approximation of those 
As en example, hyperbolic partial differential equations with random coefficients are applied in the modeling of underground water flow in porous media or, more general, of transport processes in non-uniform media, in the modeling of pollution spread and heat transfer and in traffic simulations. 
Those types of phenomena can be modeled by hyperbolic conservation laws that have the general from
\begin{equation}\label{eq:general_hyp}
    u_t + f(x,t,u)_x = 0
\end{equation}
in one spatial dimension, i.e., $x\in D\subset \rR$.
%However
As mentioned, in many \emph{realistic} applications it is often the case that there are uncertainties in the parameters of the function $f$, or that uncertainty is even intrinsic to the problem. One way to model this is the following.
Given a probability space $(\Omega, \mathcal{F}, P)$ we can incorporate those uncertainties by considering the equation
\begin{align}\label{eq:general_stochhyp}
    \begin{split}
        u_t(x,t,\omega) + f(x,t,\omega,u(x,t,\omega))_x &= 0,\\
        u(x,0,\omega) &= g(x),
    \end{split}
\end{align}
where $f$ is a (in general nonlinear) function that now depends not only on space, time, and the unknown function $u$, but also on a stochastic variable $\omega \in \Omega$ that accounts for the uncertainties in the parameters of the conservation law.
A random function $u:D\times[0,T]\times\Omega\rightarrow\R$ for which Equation~\eqref{eq:general_stochhyp} holds P-almost everywhere in $\Omega$ (that is almost surely) is called a (strong) solution.
We are then interested in the distribution or in the evolution of certain moments of the solution of this equation, typically of the expectation $\E(u)$ and the variance $\var(u)$.

%In this paper we study the effects of uncertainties on the solution of those equations. To this end, we choose to look at the simple scalar linear equation
%$$u_t+au_x=0,$$
%with constant propagation speed $a \in \R$.
%There are different ways to model uncertainty in the parameter $a$. We consider the following cases
%\begin{align}\label{eq:cases_aomega}
%u_t + a(\omega) (u_x) &= 0,\\
%\label{eq:cases_atomega}
%u_t + a(t,\omega) (u_x) &= 0,\\
%\label{eq:cases_axomega}
%u_t + a(x,\omega) (u_x) &= 0.
%\end{align}
%In the first equation, $a$ depends only on $\omega$, i.e. the specific "realization", whereas for the second and third case, uncertainty varies also with time or space, respectively.
%Observe, that equation~\eqref{eq:cases_axomega} is not conservative, because the conservative form of the equation would require $a(x)$ to be either positive or negative in order to guarantee the existence of a solution.

%In the past decades, the quantification of uncertainty for hyperbolic equations has been studied intensively. 

We restrict our attention to linear advection equations with a random transport velocity as a prototype problem.
There are many results in literature for hyperoblic equations with coefficients that are (real-valued) random variables, i.e. which do not depend on space or time. For instance, the authors in ~\cite{OL, DC08, CD07} present both theoretical results and numerical approximations.
In~\cite{DC11} the authors present expressions for the distribution of the solution of a linear advection equation with a time-dependent velocity, given in terms of the probability density function of the underlying integral of the stochastic process. Concrete results are presented in the case where the velocity field is deterministic, a random variable and Gaussian. 
Further, the same authors introduce numerical schemes for the mean of the solution of the linear transport equation with homogeneous random velocity and random initial conditions in~\cite{DC07} and the authors in~\cite{DFC09} extend the setting to Gaussian processes and telegraph processes.
In~\cite{JSK02} the linear advection equation with space- and time-dependent coefficients are subject of research. The authors develop numerical methods using polynomial chaos to solve the advection equation with a transport velocity given by a Gaussian or a log-normal distribution. In~\cite{BF16} we applied similar methods, like the ones developed here, to the magnetic induction equation and linear acoustics, both with a time- and space-dependent random background velocity field.

In order to approximate the moments of equations of type~\eqref{eq:general_stochhyp} numerically, methods are either based on a Monte Carlo approach or use a stochastic Galerkin or, more general, a polynomial chaos approach (see~\cite{JSK02, GX08, TZ10} and references therein). The latter approach is not suitable for any distribution. So far this approach is limited to uniform or Gaussian distributed fields or processes. A Monte Carlo method, on the other hand may also be used when dealing with jump processes or L\'evy random fields. This comes, however, to the price of a lower convergence rate of the Monte Carlo method.
We point out that a more efficient multilevel Monte Carlo approximation could be used in this article, but we refrain from doing so, since we wish to focus on the numerical approximation in the temporal and spatial domain as well as the approximation of the coefficient. For a result on the convergence and computational complexity of the multilevel Monte Carlo approximation for general Hilbert-space-valued random variables we refer to~\cite{BL12}. For a multilevel Monte Carlo finite volume method see for instance \cite{MSS}.
A further advantage of a Monte Carlo method based approximation is that it is non-intrusive, meaning that already implemented numerical solvers can be readily used. In addition, it does not depend on the correlation length of the stochastic input, leading to a large number of Karhunen--Lo\`eve terms for weakly correlated fields.% required to represent the stochastic input.

The article is structured as follows. In the first section we examine the linear transport equation with a time dependent coefficient $a = (a(t), t\in[0,T])$ given by the Ornstein--Uhlenbeck process. We derive a closed form expression for the moments of the distribution of the solution. We thereby extend the result found in~\cite{OL} and~\cite{DC11}. % to a time-dependent stochastic process.
Furthermore, we introduce a second order (in space and time) Monte Carlo method to approximate the solution. We present simulation results and a convergence study.
The last section presents the linear transport equation with a space-dependent coefficient $a= (a(x), x\in D)$, assumed to be a Gaussian/L\'evy random field over the domain $D$. Here, we also present a second order (in space and time) Monte Carlo method for the approximation of the solutions. We show simulations and a self-convergence study.
Although, in both cases the random transport equation is scalar and linear, we see interesting effects in the moments of the solution that differ from the deterministic variants. Furthermore, the numerical methods/discretizations for the approximation of moments of the solution to the equations become non-trivial.

\section{Time-dependent uncertainty modeled by the Ornstein--Uhlenbeck process.}
In this section we are concerned with the distribution of the solution to the random partial differential equation
\begin{align}\label{eq:problemTime}
    \begin{split}
        u_t(x,t,\omega) + \left(a(t,\omega)\,u(x,t,\omega)\right)_x &= 0,\\
        u(x,0,\omega) &= g(x)
    \end{split}
\end{align}
where we model uncertainty in a way that allows for changes over time. That means we want to solve an advection equation with a time-dependent stochastic advection parameter.
Let us start by defining $a = (a(t),t\in[0,T])$ as the solution of the Ornstein--Uhlenbeck process
\begin{align}\label{eq:OU}
    \begin{split}
        d a(t) &= \theta(\mu - a(t)) dt + \sigma dW(t),\\
        a(0) &= a_0,
    \end{split}
\end{align}
where $W=(W(t), t\in[0,T])$ is a standard Brownian motion and $\mu\in\R$, $\theta>0$ and $\sigma>0$ are parameters. In general the initial condition can be random as well.
A standard Brownian motion or Wiener process, defined on the probability space $(\Omega, \cF,P)$, is a continuous stochastic process which starts in zero $P$-a.s and has independent and normally distributed increments, i.e., $W_t-W_s\sim\cN(0,t-s)$.
The idea of equation~\eqref{eq:OU} is that there are two competing features, one is the introduction of noise via the process $W$, the other is the relaxation of the solution to the mean (see Figure~\ref{fig:problemTimeExact}(c)) for some sample solutions).
For every $t\in[0,T]$ the random variable $a(t)$ is normally distributed with mean and variance
\begin{align}\label{eq:OU_moments}
    \begin{split}
        \E(a(t)) &= \mu + (a_0-\mu)e^{-\theta t},\\
        \Var(a(t)) &= \frac{\sigma^2}{2\theta}(1-e^{-2\theta t}).% = \frac{\sigma^2}{2\theta}\left(1-\sum_{n=0}^\infty\frac{(-2\theta t)^n}{n!}\right)=\sigma^2\sum_{n=1}^\infty\frac{(-2\theta t)^{n-1}}{n!}.
    \end{split}
\end{align}
%see the proof of theorem~\ref{theorem_lineartime}.
\begin{remark}
    Mean and Variance of teh Ornstein--Uhlenbeck process can be easily calculated by using It\^o's formula with the function $f(t,x) = \e^{\theta t} x$, and looking at the dynamics of $f(t,a(t))$. This leads to the following solution of the Ornstein--Uhlenbeck process
    %\begin{equation*}
    %    d(f(t,a(t))) = \frac{\partial f}{\partial t} \,dt + \frac{\partial f}{\partial x} \,da(t) + \frac1{2}\frac{\partial^2 f}{\partial x^2} \,(da(t))^2.
    %\end{equation*}
    %Since the second derivative of $f$ with respect to $x$ is equal to $0$, we may write
    %\begin{align*}
    %    d(\e^{\theta t} a(t)) &= \theta \e^{\theta t} a(t) \,dt + \e^{\theta t}\,da(t)\\
    %    &= \theta \e^{\theta t} a(t) \,dt + \e^{\theta t}\,(\theta (\mu - a(t))\,dt + \sigma\, dW(t))\\
    %    &= \theta \mu \e^{\theta t} \, dt + \sigma \e^{\theta t}\, dW(t).
    %\end{align*}
    %This is short hand notation for 
    %\begin{equation*}
    %    \e^{\theta t} a(t) = a(0) +\theta \mu \int_0^t \e^{\theta s} \, ds + \sigma \int_0^t \e^{\theta s}\, dW(s).
    %\end{equation*}
    %And so we have
    %\begin{align}
    %    \label{eq:OU_direct}
    %    \begin{split}
    %        a(t) &= \e^{-\theta t} a(0) +\theta \mu \e^{-\theta t} \int_0^t \e^{\theta s} \, ds + \sigma \int_0^t \e^{-\theta (t-s)}\, dW(s)\\
    %        %&= \e^{-\theta t} a(0) + \mu \e^{-\theta t} (\e^{\theta t}-1) + \sigma \int_0^t \e^{-\theta (t-s)}\, dW(s)\\
    %             &= \mu + \e^{-\theta t}(a(0)-\mu) + \sigma \int_0^t \e^{-\theta (t-s)}\, dW(s).
    %    \end{split}
    %\end{align}
    \begin{equation}
        \label{eq:OU_direct}
            a(t) = \mu + \e^{-\theta t}(a_0-\mu) + \sigma \int_0^t \e^{-\theta (t-s)}\, dW(s).
    \end{equation}
    From this form we can directly deduce the expectation of $a(t)$
    %\begin{equation*}
    %    \E(a(t)) = \mu + \e^{-\theta t}(a(0)-\mu),
    %\end{equation*}
    and the variance is derived by using It\^o's isometry. %we get the variance of $a(t)$
    %\begin{equation*}
    %    \var(a(t)) = \frac{\sigma^2}{2\theta}(1-\e^{-2\theta t}).
    %\end{equation*}
\end{remark}
%That means that arbitrarily large values of $a(t,\omega)$ are possible, albeit with vanishingly small probability. (Remark: The authors in \cite{KL09} use this process in the context of non-linear traffic models.) Following \cite{KL09} we therefore consider a non-linear transformation
%\begin{equation}
%    \mathcal{G}:[-a^*,a^*]\rightarrow\R,
%\end{equation}
%and the process $a(t,\omega)$ is defined by 
%\begin{equation}%\label{eq:}
%    \dot{\mathcal{G}(a)}(t) = \theta (\mathcal{G}(\mu) - \mathcal{G}(a(t)) + \sigma \frac{dB_t}{dt},
%\end{equation}
%that is
%\begin{equation}\label{eq:OU_transformed}
%a(t,\omega) = \mathcal{G}^{-1}\left(\theta (\mathcal{G}(\mu) - \mathcal{G}(a(t)) + \sigma \frac{dB_t}{dt}\right),
%\end{equation}
%As an example, one can take $\mathcal{G}(s) = \tan (s)$.

\subsection{Theoretical results.}
In the specific case of a time-dependent coefficient we can calculate a closed form of the distribution of the solution. For the moments of the solution of equation~\eqref{eq:problemTime} we have the following result (see Figure~\ref{fig:problemTimeExact} for an example).
\begin{theorem}\label{theorem_lineartime}
    The moments of the solution to Equation~\eqref{eq:problemTime} with coefficient $a$ given by the Ornstein--Uhlenbeck process~\eqref{eq:OU} exist and are given by
    \begin{equation}\label{eq:time_coeff_moment0}
        %\begin{split}
            \E(u(x,t)) = \int g(x-y)f_{A(\hat{\sigma}^2,\hat{\mu})}(y) \,dy = (f_{A(\hat{\sigma}^2,0)}*g)(x-t\hat{\mu}),
            %\M_m[u](x,t) &= \E[(u-\E(u))^m](x,t).
        %\end{split}
    \end{equation}
    where $g(x) = u(x,0)$ and the probability density function $f_A$ is given by
    \begin{equation*}
        f_{A(\hat{\sigma}^2,\hat{\mu})}(y) = \frac{1}{\sqrt{2\pi \hat{\sigma}^2}} e^{-\frac{(y-\hat{\mu})^2}{2\hat{\sigma}^2}},
    \end{equation*}
    with diffusion coefficient $\hat{\sigma}^2=\frac{\sigma^2}{\theta^3} \big(\theta t + 2\e^{-\theta t} - \frac{1}{2}\e^{-2\theta t} - \frac{3}{2}\big)$ and transportation speed $\hat{\mu}=\mu - (a_0-\mu)\frac{\e^{-\theta t}-1}{\theta t}$. %= \mu - (a_0-\mu)\sum_{n=1}^\infty(-1)^n(\theta t)^{n-1}/n!$.
    As usual, $f*g$ denotes the convolution of the two functions.
\end{theorem}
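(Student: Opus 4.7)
The plan is to reduce the PDE to an explicit formula via the method of characteristics, observe that the integrated OU process is Gaussian, and compute its mean and variance explicitly; the expectation formula then follows from a direct application of the law of the unconscious statistician.

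Step one: since $a$ depends only on $(t,\omega)$, equation \eqref{eq:problemTime} reduces to $u_t+a(t,\omega)u_x=0$ with $u(x,0)=g(x)$. Along characteristics $\dot{x}(t)=a(t,\omega)$ the solution is constant, so I would write
\begin{equation*}
u(x,t,\omega)=g\bigl(x-A(t,\omega)\bigr),\qquad A(t,\omega):=\int_0^t a(s,\omega)\,ds,
\end{equation*}
and note that this is an almost sure identity, since the sample paths of $a$ are $P$-a.s.\ continuous and hence locally bounded and Riemann integrable.

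Step two: identify the law of $A(t)$. Using the explicit representation \eqref{eq:OU_direct} of the OU process and applying a (stochastic) Fubini theorem to swap the order of integration in
\begin{equation*}
A(t)=\mu t+(a_0-\mu)\int_0^t\e^{-\theta s}\,ds+\sigma\int_0^t\!\!\int_0^s \e^{-\theta(s-r)}\,dW(r)\,ds,
\end{equation*}
I would rewrite the iterated stochastic integral as $\sigma\int_0^t \frac{1-\e^{-\theta(t-r)}}{\theta}\,dW(r)$. This makes it transparent that $A(t)$ is Gaussian (a deterministic Wiener integral of a square-integrable kernel plus a constant). A direct computation of the deterministic part yields $\E(A(t))=\mu t+(a_0-\mu)\frac{1-\e^{-\theta t}}{\theta}=t\hat\mu$, and the It\^o isometry gives
\begin{equation*}
\Var(A(t))=\frac{\sigma^2}{\theta^2}\int_0^t\bigl(1-\e^{-\theta(t-r)}\bigr)^2\,dr,
\end{equation*}
which after the substitution $u=t-r$ and expansion of the square reduces, by elementary integration, to the announced $\hat\sigma^2=\frac{\sigma^2}{\theta^3}\bigl(\theta t+2\e^{-\theta t}-\tfrac12\e^{-2\theta t}-\tfrac32\bigr)$.

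Step three: since $A(t)\sim\cN(t\hat\mu,\hat\sigma^2)$ has density $f_{A(\hat\sigma^2,t\hat\mu)}$, the transfer theorem yields
\begin{equation*}
\E(u(x,t))=\E\bigl(g(x-A(t))\bigr)=\int g(x-y)\,f_{A(\hat\sigma^2,t\hat\mu)}(y)\,dy,
\end{equation*}
provided $g$ is measurable and the integral converges (e.g.\ $g\in L^\infty$ or of polynomial growth, in which case all moments of $u(x,t)$ exist because $A(t)$ is Gaussian). Finally, the translation identity $f_{A(\hat\sigma^2,t\hat\mu)}(y)=f_{A(\hat\sigma^2,0)}(y-t\hat\mu)$ turns the integral into the convolution $(f_{A(\hat\sigma^2,0)}*g)(x-t\hat\mu)$, yielding the second form of \eqref{eq:time_coeff_moment0}.

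The only nontrivial ingredient is the application of stochastic Fubini to interchange the Lebesgue integral in $s$ with the It\^o integral in $W(r)$; once past that, everything is a Gaussian moment calculation. If one preferred to avoid stochastic Fubini, an alternative is to observe that the finite-dimensional distributions of $a$ are jointly Gaussian, so every Riemann sum approximating $A(t)$ is Gaussian and the limit in $L^2(\Omega)$ is therefore Gaussian, after which mean and variance can be computed from $\E(a(s))$ and the OU covariance $\mathrm{Cov}(a(s),a(r))=\frac{\sigma^2}{2\theta}\bigl(\e^{-\theta|s-r|}-\e^{-\theta(s+r)}\bigr)$ integrated over $[0,t]^2$; this gives the same expressions and is arguably the cleanest route.
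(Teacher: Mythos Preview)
Your argument is correct and follows the same overall skeleton as the paper: write the pathwise solution as $g(x-A(t))$ with $A(t)=\int_0^t a(s)\,ds$, identify $A(t)$ as Gaussian, compute its mean and variance, and conclude via the density formula. The genuine difference is in the variance computation. The paper keeps the iterated integral $A(t)-\E A(t)=\sigma\int_0^t\int_0^s\e^{-\theta(s-u)}\,dW(u)\,ds$ as is, expands $\Var(A(t))$ into a double $ds\,dr$ integral, and evaluates the inner expectation $\E\bigl(\int_0^s\e^{\theta u}\,dW(u)\int_0^r\e^{\theta v}\,dW(v)\bigr)=\frac{1}{2\theta}(\e^{2\theta\min(s,r)}-1)$ before integrating out $s$ and $r$. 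You instead invoke stochastic Fubini first to collapse $A(t)$ to the single Wiener integral $\frac{\sigma}{\theta}\int_0^t(1-\e^{-\theta(t-r)})\,dW(r)$ and then apply the It\^o isometry once. Your route is shorter and makes the Gaussianity of $A(t)$ immediate; the paper's route avoids quoting stochastic Fubini but at the cost of a two-variable integration. The alternative you sketch at the end (Riemann sums plus integration of the OU covariance over $[0,t]^2$) is in fact closest in spirit to what the paper actually does.
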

\begin{remark}
    Higher moments of the solution can be calculated by
    \begin{align}%\label{eq:time_coeff_moment0}
            \M_m(u(x,t)) = \E\big((u(x,t)-\E(u(x,t)))^m\big).
    \end{align}
\end{remark}

\begin{proof}
    %%%%%%%%%%%%%%%%%
    The solution for a single realization (for a fix $\omega\in\Omega$) of Equation~\eqref{eq:problemTime} is given by $g(x-\int_0^ta(s,\omega)\,ds)$. 
    We start by calculating the first moment of this expression, i.e.
    \begin{equation*}
        \E(g(x-\int_0^ta(s)\,ds)).
    \end{equation*}
    %To this end we need to know the distribution of the pathwise solution of equation~\eqref{eq:problemTime}.
    That means, we have to calculate the distribution of the time integral over $a$, i.e. the distribution of the stochastic process
    \begin{equation*}
        A(t) = \int_0^t a(t)\, dt.
    \end{equation*}
    The process $A$ is again a Gaussian process, i.e. $A(t)\sim\cN(\hat{\mu}, \hat\sigma^2)$, and therefore completely characterized by its mean and variance. Using Fubini's theorem we have that
    \begin{align}
        \label{eq:OUtime_mu}
        \begin{split}
            \E(A(t)) %&= \E(\int_0^t a(s)\,ds)\\
            = \int_0^t \E(a(s))\, ds
            = \int_0^t \mu + \e^{-\theta s}(a_0-\mu)\, ds
            = \mu t - (a_0-\mu)\frac{\e^{-\theta t}-1}{\theta}
            =:\hat{\mu}.
        \end{split}
    \end{align}
    We express the variance of $A$ via the covariance of $A$ with itself
    \begin{equation*}
        \var(A(t)) = \text{Cov}(A(t),A(t)) = \E\big((A(t)-\E(A(t)))(A(t)-\E(A(t)))\big).
    \end{equation*}
    Using $A(t)-\E(A(t))=\sigma\int_0^t\int_0^s\e^{-\theta(s-u)}\,dW(u)\,ds$ (combine Equations~\eqref{eq:OU_direct} and~\eqref{eq:OUtime_mu}) this yields
    \begin{align*}
        \var(A(t)) &= \E\big(\sigma \int_0^t \int_0^s \e^{-\theta(s-u)}\,dW(u)\,ds\, \sigma \int_0^t \int_0^r \e^{-\theta(r-v)}\,dW(v)\,dr\big)\\
        %&= \sigma^2 \E\big( \int_0^t \e^{-\theta s}\int_0^t\e^{-\theta r}\int_0^s \e^{\theta u}\,dW(u) \int_0^r \e^{\theta v}\,dW(v)\,dr\,ds\big)\\
        &= 2\sigma^2 \int_0^t \e^{-\theta s}\int_0^t\e^{-\theta r} \,\E\big( \int_0^s \e^{\theta u}\,dW(u) \int_0^r \e^{\theta v}\,dW(v)\big)\,dr\,ds,
    \end{align*}
    using Fubini's theorem.
    For a Brownian motion $W$, it is known that $$\E( \int_0^s \e^{\theta u}\,dW(u) \int_0^r \e^{\theta v}\,dW(v))= \frac1{2\theta}(e^{2\theta \text{min}(s,r)}-1).$$
    Therefore, we have
    \begin{align*}
        \var(A(t)) &= 2\sigma^2 \int_0^t \e^{-\theta s}\int_0^s\e^{-\theta r} \frac1{2\theta}(e^{2\theta \text{min}(s,r)}-1)\,dr\,ds\\
        &= \frac{\sigma^2}{\theta} \int_0^t \e^{-\theta s}\int_0^s\e^{-\theta r}(e^{2\theta r}-1)\,dr\,ds
        %&= \frac{\sigma^2}{\theta^2} \big(t-\frac{2}{\theta}(\e^{-\theta t}-1)-\frac{1}{2\theta}(\e^{-2\theta t}-1)\big)\\
        = \frac{\sigma^2}{\theta^3} \big(\theta t + 2\e^{-\theta t} - \frac{1}{2}\e^{-2\theta t} - \frac{3}{2}\big)
        =:\hat{\sigma}^2.
    \end{align*}
    This gives us the variance of $A(t)$ depending on the variables $\theta$ and $\sigma$.

    Therefore, the expectation of the solution of equation~\eqref{eq:problemTime} is given by
    \begin{align*}
        \E(g(x-\int_0^ta(s)\,ds)) = \E(g(x-A(t)))\\
        =\int_{-\infty}^\infty g(x-y) f_A(y) \,dy
    \end{align*}
    where $f_A$ is the normal density function with parameters $\hat{\mu}$ and $\hat{\sigma}^2$ given by
    \begin{equation*}
        f_A(y) = \frac1{\sqrt{2\pi\hat{\sigma}^2}}\e^{-\frac{(y-\hat{\mu})^2}{2\hat{\sigma}^2}}.
    \end{equation*}
\end{proof}
\begin{remark}
    For the limit $\theta \rightarrow 0$, we recover the corresponding result for a pure Brownian motion process (i.e. $a(t)= \sigma W(t)$), where $\hat{\mu} = \mu$ and $\hat{\sigma}^2 = \sigma^2\frac{t^3}{3}$.
    %It is interesting to observe $\lim_{\theta^+ \rightarrow 0} \var(A(t))$.
    This can be shown by a Taylor expansion as
    \begin{align*}
        \var(A(t)) &= \frac{\sigma^2}{\theta^3} \big(\theta t + 2\e^{-\theta t} - \frac{1}{2}\e^{-2\theta t} - \frac{3}{2}\big)\\
                   &= \frac{\sigma^2}{\theta^2} \Big(\theta t + 2\left(1 -\theta t +\theta^2 t^2/2 -\theta^3 t^3/3! + \Ord(\theta^4)\right)\\
                   &\quad \quad \quad - \frac{1}{2}\left(1 -2\theta t +4\theta^2 t^2/2 - 2^3\theta^3 t^3/3! + \Ord(\theta^4)\right)- \frac{3}{2}\Big)\\
                   %&= \frac{\sigma^2}{\theta^3} \big(\theta t + 2 -2\theta t +\theta^2 t^2 -\theta^3 t^3/3 -1/2 +\theta t -\theta^2 t^2 + 2^2\theta^3 t^3/3! - \frac{3}{2}+ \Ord(\theta^4)\big)\\
                   %&= \frac{\sigma^2}{\theta^3} \big(
                   %2  -1/2 - \frac{3}{2} +
                   %\theta t -2\theta t +\theta t
                   %+\theta^2 t^2-\theta^2 t^2
                   %-\theta^3 t^3/3 + 2^2\theta^3 t^3/3!+ \Ord(\theta^4)\big)\\
                   %&= \frac{\sigma^2}{\theta^3} \big(
                   %\theta^3 t^3/3 + \Ord(\theta^4)\big)\\
                   &= \sigma^2 t^3/3 + \Ord(\theta).
    \end{align*}
    A similar Taylor expansion shows the result for $\E(A(t))$.
\end{remark}

Although we have a formula for the moments of the solutions to the linear advection equation with a velocity field given by the Ornstein-Uhlenbeck proccess, it will not be possible to obtain analytical solutions for a general hyperbolic equation and/or a general stochastic process. As this is a prototype problem, we therefore introduce a Monte Carlo based approximation of the solutions to Equation~\eqref{eq:problemTime} in the following.

\subsection{First order discretizations.}\label{sec:firstorderTime}
For the approximation of the (moments of the) solution to partial differential equations with random coefficient we have to discretize in space and time, as well as in the ``stochastic domain".
Here we use a Monte Carlo method with underlying first and higher order schemes (in space and time).
That means, that for each realization $\omega$ of Equation~\eqref{eq:problemTime} we have to approximate the (deterministic) solution of a hyperbolic partial differential equation. Our base method for each realization is, therefore, a finite volume scheme, see e.g.~\cite{LEV1} and references therein.

Before we continue with a technical description of the schemes used, we introduce some useful notation. As usual, $\Dx$ denotes the equidistant spatial step size. For $i=1,...,I$,  $I\in\N$, the cell centers are given by $x_i=(i-\frac{1}{2})\Dx$ together with the according cell interfaces $x_{i-1/2}=(i-1)\Dx$ for $i=1,...,I+1$. Similarly, $\Dt$ is the varying temporal step size leading to the discrete times $t^n= \sum_{i=1}^n \Delta t^i$ for $n\in\N$. For a function $b(x,t)$, we set $b_i^n=b\left(x_i, t^n\right)$.

%In the time-dependent case however, we have to be careful if we want to use an adaptive base method, since $a$ is modeled by an Ornstein-Uhlenbeck process. To clarify the problem, we present a short derivation of finite volume schemes for equation~\eqref{eq:problemTime}.

A finite volume scheme is obtained by integrating Equation~\eqref{eq:problemTime} over some time interval $T^n=[t^{n-1},t^n]$, $t^n=t^{n-1} + \Dt$, (where $\Dt$ is still to be determined) and a control volume $X_i=[x_{i-1/2},x_{i+1/2}]$, leading to
\begin{equation*}
    0=\int_{T^n}\frac{1}{|X_i|}\int_{X_i} u_t + \left(a(t) u\right)_x \,dx \,dt
    =\frac{1}{|X_i|}\int_{X_i} u \,dx|_{t=t^{n-1}}^{t^n} + \frac{1}{|X_i|}\int_{T^n}\left(a(t) u\right) \,dt|_{x=x_{i-1/2}}^{x_{i+1/2}}.
\end{equation*}
Denoting the cell averages by $u_i(t) = \frac{1}{|X_i|}\int_{X_i} u \,dx$, we may write
\begin{equation}
    \label{eq:FVtime}
    u_i(t^n)= u_i(t^{n-1}) - (F^n_{i+1/2} - F^n_{i-1/2})/|X_i|,
\end{equation}
where the flux $F^n_{i+1/2}$ approximates the following integral 
\begin{equation*}
    F^n_{i+1/2} \approx \int_{T^n}\left(a(t) u\right) \,dt|_{x=x_{i+1/2}} = \int_{T^n}\left(a(t) \right) \,dt \,u|_{x=x_{i+1/2}}.
\end{equation*}
One possibility for this approximation is the standard upwind stencil, see~\cite{LEV1}
\begin{equation}
    \label{eq:Fluxtime}
    F^n_{i+1/2} - F^n_{i-1/2} = \text{max}(a^n,0)(u_i^{n-1}-u_{i-1}^{n-1}) + \text{min}(a^n,0)(u_{i+1}^{n-1}-u_i^{n-1}),
\end{equation}
where $a^n \approx \int_{T^n}a(t)\,dt$. We approximate this integral by choosing a point $t^* \in[t^{n-1},t^n]$, usually $t^*=t^{n-1}$, and setting
\begin{equation}
    \label{eq:timeintegral}
    a^n = \Dt a(t^*).
\end{equation}
%%% The CFL condition leads then to the time step
%%% \begin{equation}
%%%   \label{eq:stepsize}
%%%     \Dt = \frac{C_0 \Dx}{|a(t^*)|}, division by 0!!!.
%%% \end{equation}
%%% It seems natural to use those time steps also for the approximation of the process $a$. This would lead to the following approximation 
%%% \begin{align*}
%%%   \label{eq:stepsize}
%%% %   \begin{aligned}
%%%   \hat a^0 &= \mu, \\
%%%   \hat a^{n+1} &= \hat a^n + \sqrt{\sigma \Dt} Y^n,
%%% %   \end{aligned}
%%% \end{align*}
%%% where $Y^n \sim \mathcal{N}(0,1)$ is generated using a random number generator.
%%% However, $\hat a^n$ does not approximate a Gaussian process, since $\Dt$ depends on $n$, specifically on $\hat a^n$.

In order to obtain the values $a^n$ we have to approximate the Ornstein--Uhlenbeck process, that is we need a discretization of the solution to Equation~\eqref{eq:OU}.
%$$a'(t) = \theta (\mu - a(t)) + \sigma dBt/dt$$
We use an implicit Euler--Maruyama method (which is in this case equal to the Milstein method, since $\sigma$ is a constant) for the potentially stiff ODE
$$a(s^{l+1}) - a(s^l) = \Delta s\theta\left(\mu - a(s^{l+1})\right) + \sigma \sqrt{\Delta s} Y^l, \ l\in\N$$.
This is equivalent to
\begin{equation}
    \label{eq:OUAppr}
    \begin{aligned}
        a^0 &= \mu, \\
        a^{l+1} &= \frac{a^l + \Delta s\theta\mu + \sigma\sqrt{\Delta s} Y^l}{1 + \Delta s\theta}
    \end{aligned}
\end{equation}
where $(Y^l, l\in\N)$ is a sequence of independent $\mathcal{N}(0,1)$-distributed random variables.
%\begin{equation}
%  \label{eq:BrownAppr}
%  \begin{aligned}
%  a^0 &= \mu, \\
%  a^{l+1} &= a^l + \sqrt{\sigma \Delta s} Y^l, \quad l=0,...,L-1. 
%  \end{aligned}
%\end{equation}
For a good approximation of the Ornstein--Uhlenbeck process used in the Monte Carlo simulation of Equation~\eqref{eq:problemTime}, i.e. in Equation~\eqref{eq:FVtime}, the constant step size $\Delta s$ should be chosen small enough, such that $\Delta s \leq \Delta t^n$, for all $n$, at least roughly.
In the simulations we choose
\begin{equation}
    \label{eq:OUApprDs}
    \Delta s = \frac{T}{\lceil3 T \lambda / \Delta x\rceil}, \mbox{ where } \lambda=\mu+\sigma.
\end{equation}
We would like to summarize the above steps in the following algorithm.
\begin{algorithm}[H]\caption{Time-dependent uncertainty (for python script see \cite{FGF})
    }\label{algorithmTime}
    \begin{algorithmic}
        \REQUIRE $M\in\mathds{N}$
        \FOR{each sample $j=0$ to $M-1$}
            %\STATE Choose fixed step size $\Delta s = \mathcal{O} (\sigma T)$, such that for $L\in\N$ it holds $L\Delta s = T$
            \STATE Create $a^l$ defined in equations~\eqref{eq:OUAppr} and~\eqref{eq:OUApprDs} as approximation of $a(t)$
            \STATE Define the piecewise constant function $\hat a(t) = a^l$, for $t\in[l\Delta s,(l+1) \Delta s)$
            \STATE $t\leftarrow 0$, $n\leftarrow 0$
            \STATE Initialize cell averages $u_i^0$ for each cell $[x_{i-1/2},x_{i+1/2}]$
            %\STATE Set $\Delta t = c\, \Dx/|a(\omega)|$, where $c$ is the Courant number
            \WHILE{$t< T$}
                %\IF{$t+\Delta t>T$}
                %\STATE $\Delta t = T-t$ %if $t+\Delta t>T$
                %\ENDIF
            \STATE Find $\Dt$ such that $\Phi(t+\Dt;t,\Dx) = |\int_{t}^{t+\Dt}\hat a(t) dt| - C_0\Dx = 0$ (and $t+\Dt\leq T$)
                \STATE Set $A^n = \int_{t}^{t+\Dt}\hat a(t) dt$
                \STATE With (periodic) boundary conditions for $u_i^n$ apply time step
                \STATE $u_i^{n+1} =u_i^n - 1/\Dx \left(  \text{max}\,(A^n,0)(u_i^n - u_{i-1}^n) 
                                    + \text{min}\,(A^n,0)(u_{i+1}^n - u_i^n)\right) $
                \STATE $t\leftarrow t + \Dt$
                \STATE $n\leftarrow n + 1$
            \ENDWHILE
        \ENDFOR
        %       \RETURN $\frac{\theta}{M}$
    \end{algorithmic}
\end{algorithm}
We would like to emphasize that the calculation of $\Dt$ in Algorithm~\ref{algorithmTime} is an important part of the algorithm.
We implemented bisection for root-finding in the following way. Given a time $t$ we first increase $l$ (from the previous value, initially 0) %(where $l$ is the largest natural number such that $\hat{t}_l\leq t$) 
%until for $s_{l+1}= (l+1)\Delta s$ 
until we have $(l+1)\Delta s>t$ and $\Phi((l+1)\Delta s;t,\Dx) \geq 0$. Then we use bisection to find the root of $\Phi$ in the interval $[l\Delta s, (l+1)\Delta s]$.

This algorithm for finding $\Dt$ allows for a large stepsize where possible, while still approximating the time interval accurately. The advantage of a larger step size in the finite volume method is less numerical diffusion. In order to further reduce numerical diffusion of the base scheme, Algorithm~\ref{algorithmTime} can easily be extended to be second order in space and time as follows.

\subsection{Second order base scheme.}\label{sec:secondorder}
For the second order scheme in space and time two further ingredients are needed in each time step: using a non-oscillatory second order reconstruction by limiters (see~\cite{LEV1,VL1,HEOC1,SO1}) and the second order time stepping (see equation~\eqref{eq:SSP}).

To achieve second order accuracy in space it is standard (see, e.g.,~\cite{LEV1}) to replace the piecewise constant approximation $u_i$ of $u$ with a non-oscillatory piecewise linear reconstruction in-order to obtain second-order spatial accuracy. There are a variety of reconstructions including the popular TVD-MUSCL limiters (see, e.g.,~\cite{VL1}), ENO reconstruction (see, e.g.,~\cite{HEOC1}) and WENO reconstruction (see, e.g.,~\cite{SO1}). 
In this article we present results for the minmod and the superbee limiter, see for instance~\cite{LEV1}. We choose those two from a wide range of possible limiters, because both are TVD (total variation diminishing), but the minmod is the most "pessimistic" and the superbee is the most "optimistic" limiter in the TVD regime.

To present a scheme that is second-order in time, we use the second-order strong-stability preserving Runge--Kutta (SSP) time stepping given by
\begin{align}\label{eq:SSP}
    \begin{split}
        u^{\ast}_i &= u^n_i + \Dt  \boldsymbol{\mathcal{F}}^n_i, \\
        \quad u^{\ast \ast}_i &= u^{\ast}_i + \Dt
        \boldsymbol{\mathcal{F}}^\ast_i, \\
        u^{n+1}_i &= \frac{1}{2}(u^n_i + u^{\ast \ast}_i),
    \end{split}
\end{align}
where $\boldsymbol{\mathcal{F}}^n_i$ and $\boldsymbol{\mathcal{F}}^\ast_i$ are the numerical approximation of the fluxes, see e.g.~\cite{GST1}.
The time step is determined by a standard CFL condition. For both first and second order schemes we use a Courant number of $C_0 = 0.45$, see Algorithm~\ref{algorithmTime}. Although we have superconvergence, i.e. in some cases the upwind scheme reproduces the exact solution for $C_0 = 1$, we use a lower Courant number since "superconvergence" is not representative for typical schemes or more involved problems.

\subsection{Measurement of errors}
We are interested in measuring the error of the Monte Carlo estimator.
%If we are given an exact solution for the moments of the solution $u$ of a stochastic partial differential equation, we can measure the relative error in the $L^1$ norm.

Let $\E(u)$ be the expectation of the exact solution $u$ and $u_{i,m}$ the numerical approximation to the solution of the partial differential equation for the $m$-th realization.
Then, the relative approximation error of the expectation in the $L^1$ norm is given by
\begin{equation}\label{eq:E_appr}
    \eps_{\text{appr}}(t) = \frac{\Delta x \sum_i | (\frac{1}{M} \sum_m u_{i,m}(x_i,t) ) - \E(u(x_i,t^n)) |} {\Delta x \sum_i | \E(u(x_i,t)) |}.
\end{equation}

\begin{remark}
    It is interesting to observe that the approximation error $\eps_{\text{appr}}$ is bounded by the sum of the numerical error $\eps_{\text{num}}$ of the base method and the pure Monte Carlo error $\eps_{\text{MCM}}$, that is
    \begin{equation}\label{eq:errorest}
        \eps_{\text{appr}}(t) \leq \eps_{\text{num}}(t) + \eps_{\text{MCM}}(t).
    \end{equation}
    Here, the relative $L^1$-error of the Monte Carlo approximation is given by
    \begin{equation}\label{eq:E_MCM}
        \eps_{\text{MCM}}(t) = \frac{\Delta x (\sum_i | \frac{1}{M} \sum_{m=1}^M u_{m}(x_i,t)  - \E(u(x_i,t)) |)} {\Delta x \sum_i| \E(u(x_i,t)) |},
    \end{equation}
    where $u_m$ denotes the exact solution of the partial differential equation for the $m$-th realization.
    The relative approximation error in the $L^1$-norm of the deterministic numerical method is
    \begin{equation}\label{eq:E_num}
        \eps_{\text{num}}(t) = \frac{\Delta x \sum_i | \frac{1}{M} \sum_m (u_{i,m}(x_i,t) - u_m(x_i,t) )|} {\Delta x \sum_i | \E(u(x_i,t)) |},
    \end{equation}
    Using the triangle inequality, it is trivial to show the relationship \eqref{eq:errorest}. If one uses the (squared) mean-squared errors (i.e. $L^2$-errors) then one may even show equality. 

    Relation~\eqref{eq:errorest} shows that the approximation error is bounded by the dominating part of the sum of the numerical error and the pure Monte Carlo error. The Monte Carlo method converges with the rate $1/2$ in the number of samples in mean square and is independent of the resolution of the grid, i.e. the size of $\Dx$. On the other hand, the numerical method, being first order, converges with $\mathcal{O}(\Dx)$ for each single realization, independent of the number of Monte Carlo samples.
    Therefore, equation~\eqref{eq:errorest} suggests that our Monte Carlo method is most efficient if $\eps_{\text{num}} \simeq \eps_{\text{MCM}}$
\end{remark}

Similarly,
let $\Var(u)$ be the variance of the exact solution $u$ and $u_{i,m}$ the numerical approximation to the solution of the partial differential equation for the $m$-th realization.
Then the absolute approximation error of the variance in the $L^1$ norm is given by
\begin{equation}\label{eq:V_appr}
    \delta_{\text{appr}}(t) = \Delta x \sum_i | (\frac{1}{M} \sum_m (u_{i,m}(x_i,t)  - \mu_{i,m}(x_i,t) )^2 ) - \var(u(x_i,t)) |,
\end{equation}
with $\mu_{i,m}$ denoting the (empirical) expectation of $u_{i,m}$.
%
%\begin{figure}[htbp]
%    \centering 
%    \begin{tabular}{lr}
%        \subfigure[Exact expectation and 3 sample solutions]{\includegraphics[width=0.49\linewidth,type=png,ext=.png,read=.png]
%        {linearAdvectionTimeDependent_mu0.25_sigma0.1_theta1.0_nxsamples1600x1000000_order2_minmod_OrnsteinUhlenbeck_articleE_T025}} 
%        &
%        \subfigure[Exact variance]{\includegraphics[width=0.49\linewidth,type=png,ext=.png,read=.png]
%        {linearAdvectionTimeDependent_mu0.25_sigma0.1_theta1.0_nxsamples1600x1000000_order2_minmod_OrnsteinUhlenbeck_articleV_T025}}
%        \\
%        \subfigure[Corresponding 3 approximations of the Ornstein-Uhlenbeck process]{\includegraphics[width=0.49\linewidth,type=png,ext=.png,read=.png]
%        {linearAdvectionTimeDependent_mu0.25_sigma0.1_theta1.0_nxsamples1600x1000000_order2_minmod_OrnsteinUhlenbeck_articleOU_T025}} 
%        &
%        \begin{parbox}{0.49\linewidth}{\hskip -4.5cm \vskip -5.5cm
%            {\caption{The time-dependent problem given in equation~\eqref{eq:problemTime} with $\mu=1/4, \theta=1, \sigma=1/10$ at time $t=1/4$. (a) shows three sample solutions for the second order scheme with minmod limiter using 1600 mesh points and (c) shows the corresponding approximations of the Ornstein-Uhlenbeck process. In (b) we see the exact variance.}
%            \protect \label{fig:problemTimeExactT025}
%            }}
%        \end{parbox}
%    \end{tabular}
%\end{figure}

\begin{figure}[htbp]
    \centering 
    \begin{tabular}{lr}
        \subfigure[Exact expectation and 3 sample solutions]{\includegraphics[width=0.49\linewidth,type=png,ext=.png,read=.png]
        {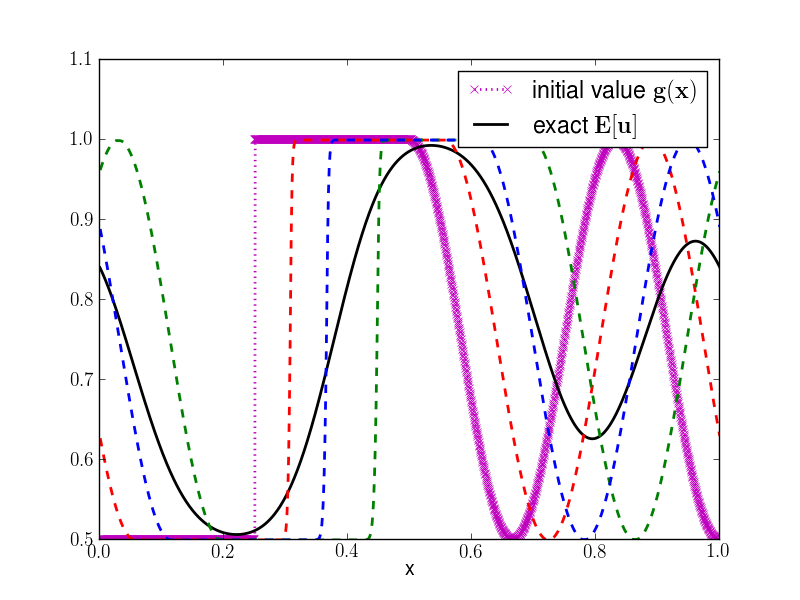}} 
        &
        \subfigure[Exact variance]{\includegraphics[width=0.49\linewidth,type=png,ext=.png,read=.png]
        {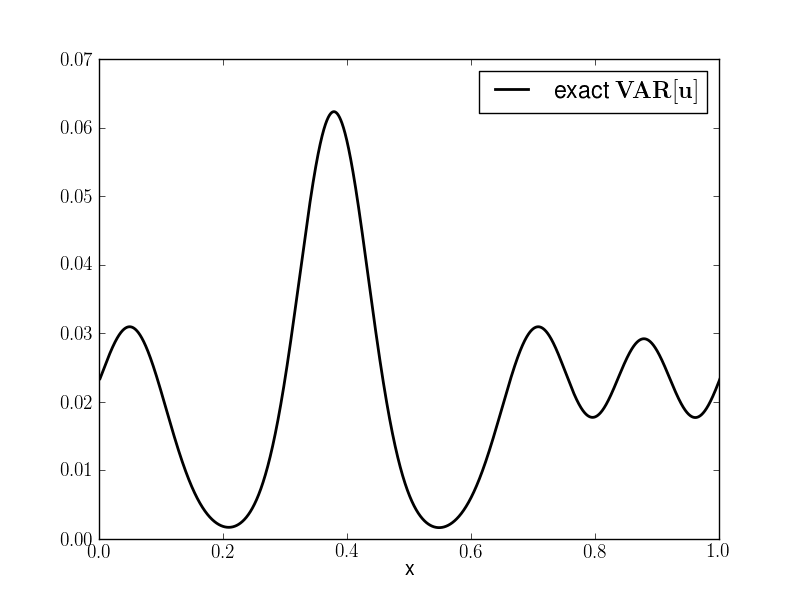}}
        \\
        \subfigure[Corresponding 3 approximations of the Ornstein--Uhlenbeck process]{\includegraphics[width=0.49\linewidth,type=png,ext=.png,read=.png]
        {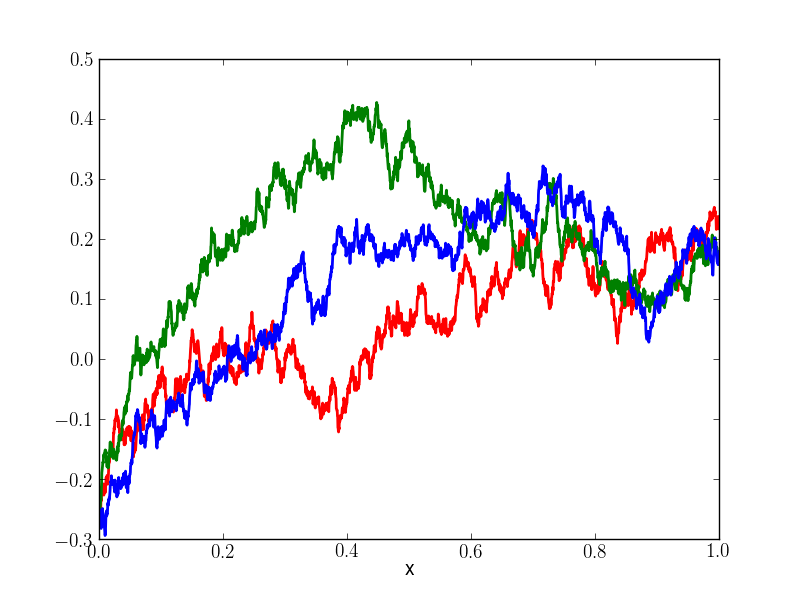}} 
        &
        \begin{parbox}{0.49\textwidth}{\hskip -4.5cm \vskip -5.5cm
                {\caption{The time-dependent problem given in Equation~\eqref{eq:problemTime} with $a(0)=-1/4$, for the parameter set $(\mu, \theta, \sigma) = (1/4, 4, 1/\sqrt{10})$. In (a) three sample solutions for the second order scheme with minmod limiter using 1600 mesh points and in (c) the corresponding approximations of the Ornstein--Uhlenbeck process are shown. We see the exact variance in (b) and the exact expectation in (a) at time $t=1$.}
            \protect \label{fig:problemTimeExact}
            }}
        \end{parbox}
    \end{tabular}
\end{figure}
\subsection{Simulation results of time-dependent uncertainty.}
In the following, we test the Monte Carlo method described in Algorithm~\ref{algorithmTime}.
In order to avoid numerical effects from boundary conditions we define the partial differential equation in expression~\eqref{eq:problemTime} on a spacial domain $[x_L,x_R]=[0,1]$ with periodic boundary conditions for both $u$ and the initial condition $g$, i.e.
\begin{equation}\label{eq:problembc}
    u(x_L,t,\omega) = u(x_R,t,\omega), \quad \text{ for all } t\geq0, \omega \in \Omega.
\end{equation}

%\begin{figure}[htbp]
%    \includegraphics[width=.5\textwidth]{initialrho.png}
%    \caption{Initial condition $g$.}
%    \protect \label{fig:initialrho}
%\end{figure}
In general initial conditions for hyperbolic problems consist of both smooth and discontinuous parts. In order to test our numerical schemes properly we therefore choose the initial condition to contain a sine wave and a jump-discontinuity, as shown in Figure~\ref{fig:problemTimeExact}(a).
We choose the deterministic initial condition for the Ornstein--Uhlenbeck process to be
$$a(0) = -\mu.$$
Three typical sample paths of Equation~\eqref{eq:OU} are plotted in Figure~\ref{fig:problemTimeExact}(c) for the parameter set $(\mu, \theta, \sigma) = (1/4, 4, 1/\sqrt{10})$ with $t\in[0,1]$.
As expected the Ornstein--Uhlenbeck process starts at $a_0=-\mu$ and (since $\theta>0$) fairly quickly relaxes to values around $+\mu$. Figure~\ref{fig:problemTimeExact}(a) shows the according three approximations to the (sample) solutions $u$ to the partial differential Equation~\eqref{eq:problemTime} for the different realizations of the Gaussian process $a$ shown in (c). They are obtained from Algorithm~\ref{algorithmTime} with a second order scheme using the minmod-limiter and with $1600$ mesh points. Since the samples $a$ start at a negative value, the initial profile $u$ gets advected to the left at first. But as time progresses, those sample paths eventually have positive values $a$ and therefore the solution $u$ of the PDE starts moving to the right again.

We can see in Figure~\ref{fig:problemTimeExact}(a) that the expectation $E(u)$ at time t consists of the initial function $g$ transported with speed $\hat{\mu}$ and smeared with the rate $\hat{\sigma}$, according to Theorem~\ref{theorem_lineartime}. The variance, shown in (b), is highest at the transported initial (now smoothed out) jump discontinuity. 
\begin{figure}[htbp]
    \centering 
    \begin{tabular}{lr}
        \subfigure[Error expectation]{\includegraphics[width=0.49\linewidth,type=png,ext=.png,read=.png]{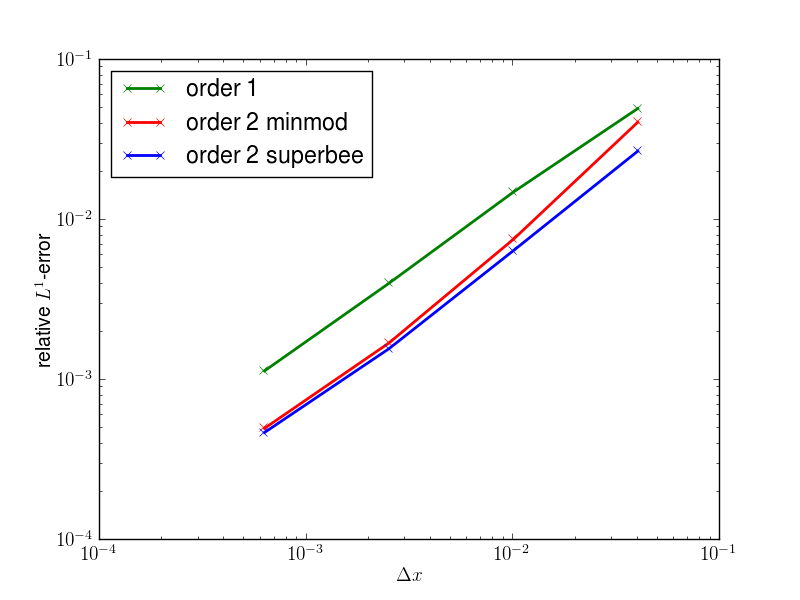}}
        &
        \subfigure[Error variance]{\includegraphics[width=0.49\linewidth,type=png,ext=.png,read=.png]{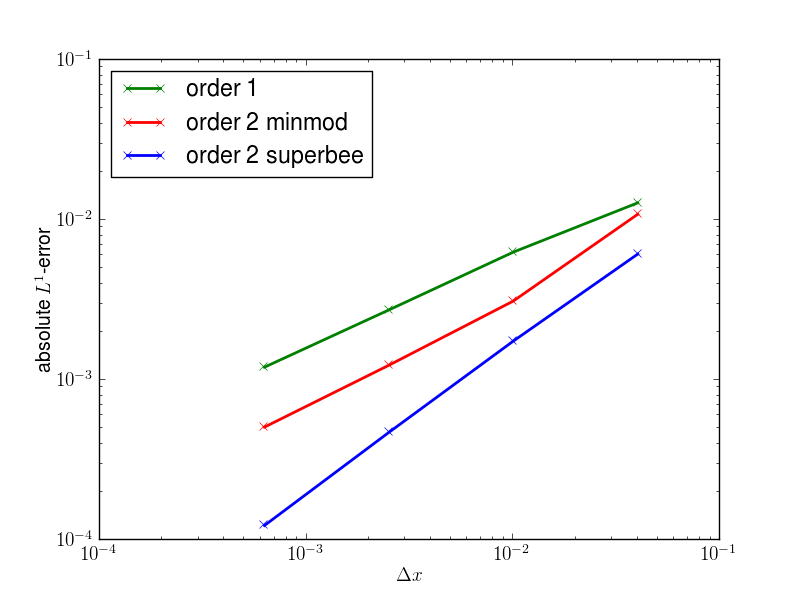}}
        \\
    \end{tabular}
    \caption{The $L^1$-errors for the time-dependent problem (see Equation~\eqref{eq:problemTime}). Dependence of the errors defined in Equations~\eqref{eq:E_appr} and \eqref{eq:V_appr} on the number of grid points at time $t=1$ with $a(0)=-1/4$ and for the parameter set $(\mu, \theta, \sigma) = (1/4, 4, 1/\sqrt{10})$ using $M=10^6$ Monte Carlo simulations.}
    \protect 
    \label{fig:problemTimeConvergence}
\end{figure}

Next, we test the convergence of the schemes described in Algorithm~\ref{algorithmTime} with respect to mesh refinement. Therefore, we choose a high number of samples $M$ in the Monte Carlo simulation, such that the dominating error of $\eps_{\text{appr}}$ is the one of the numerical base method, see Inequality~\eqref{eq:errorest}.
We compare first and second order base schemes with a Courant number of $0.45$.
We present plots for the approximation errors of the first two moments $\eps_{appr}$ and $\delta_{appr}$. As expected, Figure~\ref{fig:problemTimeConvergence} shows that overall the second order schemes have a smaller error than the first order scheme. Among the two second order schemes, the one using the superbee limiter has the smaller error, especially the error for the variance.

The results indicate that the approximation of the distribution of the solution to Equation~\eqref{eq:problemTime} given by the Monte Carlo method presented in Algorithm~\ref{algorithmTime} converges to the exact solution as $M\rightarrow\infty$ and $\Delta x \rightarrow 0$ simultaneously. This concludes the time dependent case and we continue with space dependent coefficients.

\section{Space dependent uncertainty.}
In this section we investigate the case where the uncertainty depends on the space variable, i.e. the advection parameter is a random field with a given covariance.
More specifically, we look at the following equation
\begin{align}\label{eq:problemSpace}
    \begin{split}
        u_t(x,t,\omega) + a(x,\omega)\,u_x(x,t,\omega) &= 0,\\
        u(x,0,\omega) &= g(x).
    \end{split}
\end{align}
The coefficient $a$ is then modeled as a random field, which takes values in a function space $H$ over the domain $D\subset\R$, here $H:=L^2(D)$. We assume that the random field $a$ is characterized by its mean and its covariance operator. More precisely, we assume that there exists a covariance operator $Q\in L_1^+(H)$, where $L_1^+(H)$ denotes the space of all nonnegative, symmetric and nuclear operators in $H$. For every such operator the Hilbert--Schmidt theorem on the spectral representation holds: there exists an orthonormal basis $(e_i,i\in\N)$ of $H$ such that $Qe_i=\lambda_i e_i$, where all $\lambda_i\geq 0$ in the sequence $(\lambda_i,i\in\N)$ and 0 is its only accumulation point. Such a random field is characterized by its Karhunen--Lo\`eve expansion
\begin{equation}\label{eq:KL_expansion}
    a(x,\omega) = \mu + \sum_{i\in\N} \sqrt{\lambda_i}\beta_i(\omega)e_i(x).
\end{equation}
Here, $(\beta_i,i\in\N)$ is a sequence of independent normally distributed random variables and $\mu$ is finite. A similar expression holds if $a$ is a L\'evy field. Then we have
\begin{equation}\label{eq:KL_expansion_Levy}
    a(x,\omega) = \mu  + \sum_{i\in\N} \sqrt{\lambda_i} L_i(\omega)e_i(x).
\end{equation}
In this case, $(L_i,i\in\N)$ is a sequence of real-valued, orthogonal Poisson-distributed random variables.

\begin{remark}
    We would like to point out that Equation~\eqref{eq:problemSpace} is not a conservative equation. Solutions of the conservative advection equation with space-dependent variables, i.e. $u_t + (a(x)u)_x = 0$ consist in general of delta functions (see~\cite{LEV1} Chapter 16.4).
\end{remark}

It is challenging to derive a closed form for the distribution of the solutions to Equation~\eqref{eq:problemSpace}. We would like, however, to present a possible way in that direction, by showing a bound on the characteristic curves of that equation.

\subsection{Theoretical results.}
We could not find any hint in the literature to a closed form solution to Equation~\eqref{eq:problemSpace}. However, one could find the distribution of the solution by looking at the characteristic curves. The characteristic curves are the solutions of the autonomous ordinary (in this case stochastic) differential equation
\begin{equation}\label{eq:char_curve_ode}
    \frac{dX(t)}{dt} = a(X(t),\omega) \qquad \Leftrightarrow \qquad dX(t) = a(X(t),\omega)\,dt.
\end{equation}
Equation~\eqref{eq:problemSpace} is linear and therefore the solution along the characteristic curves is constant. Furthermore, for a linear advection equation, even one with variable coefficients, the characteristics will never cross, see~\cite[p. 208]{LEV1}.
Using the Karhunen-Lo\`eve expansion~\eqref{eq:KL_expansion} one can write the equation of the characteristics~\eqref{eq:char_curve_ode} as
\begin{equation}\label{eq:SDE}
    X(t) %= X(0) + \int_0^t a(X(s),\omega)\,ds
    %&= X(0) + \mu t + \int_0^t  \sum_{i\in\N} \sqrt{\lambda_i}\beta_i(\omega)e_i(X(s))\,ds\\
    = X(0) + \mu t + \int_0^t \sum_{i\in\N} \beta_i(\omega)f_i(X(s))\,ds,
\end{equation}
where we set $f_i(\cdot)=\sqrt{\lambda_i}e_i(\cdot)$ for all $i\in\N$. 
We want to proof the existence of a solution to Equation~\eqref{eq:char_curve_ode} in the space $L^2(\Omega;C([0,T];\R))$ of square integrable functions with values in $C([0,T];\R)$. One important example of a Covariance operator is given by the Gaussian covariance kernel. For instance in the overview article~\cite{F11} one can find expressions for the eigenvalues $(\lambda_i,i\in\N)$ and eigenfunctions $(e_i,i\in\N)$ of the Gaussian covariance operator with integral kernel $q(x,y) = \e^{\frac{(x-y)^2}{2}}$. They are given by
\begin{equation*}
    \lambda_i = \frac{1}{(1+\sqrt{3}/2)^{1/2}}\frac{1}{(2+\sqrt{3})^i}
\end{equation*}
and
\begin{equation*}
    e_i(x) = \frac{3^{1/8}}{\sqrt{2^i i!}} \e^{-(\sqrt{3}-1)\frac{x^2}{2}} H_i(3^{1/4} x),
\end{equation*}
where $H_i$ denotes the $i$-th Hermite polynomial. Then, for each $i\in\N$, $f_i$ is bounded since
\begin{equation*}
    |f_i(X(s))| \leq \frac{3^{1/8}}{(1+\sqrt{3}/2)^{1/4}} \left(\frac{3}{(2+\sqrt{3})}\right)^{i/2} \frac{1}{\sqrt{2^ii!}} \e^{-\frac{(\sqrt{3}-1)}{2}X(s)^2} |X(s)^i|.
\end{equation*}
And further 
\begin{equation*}
    \frac{1}{\sqrt{2^ii!}}\e^{-\frac{(\sqrt{3}-1)}{2}X(s)^2} |X(s)^i|\leq 1
\end{equation*}
since for any $y\in\R$ we have
\begin{equation*}
    y \leq \e^{(b/i) y^2}\e^{1/2(\ln(2)+\ln(i!)/i)},
\end{equation*}
where $b=\frac{(\sqrt{3}-1)}{2}$. Overall it follows that, for all $s\in[0,T]$, $|f_i(X(s))|\leq C<+\infty$ and, therefore, $|e_i(X(s))|<C$. This result can be generalized for all $Q\in L^+_1(H)$. With this in hand we show that the solution to the stochastic differential Equation~\eqref{eq:SDE} $X\in L^2(\Omega;C([0,T];\R))$.

\begin{lemma}
 If $\sum_{i\in\N}\sqrt{\lambda_i}<+\infty$ and $X(0)\in L^2(\Omega;\R)$, then the solution to Equation~\eqref{eq:SDE} $X$ belongs to $L^2(\Omega;C([0,T];\R))$.
\end{lemma}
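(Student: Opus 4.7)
The plan is to treat equation~\eqref{eq:SDE} pathwise as a deterministic ODE and to derive an a priori sup-norm bound whose $L^2(\Omega)$-norm is finite. For fixed $\omega$, write the drift as
\[
a(x,\omega) := \mu + \sum_{i\in\N}\sqrt{\lambda_i}\,\beta_i(\omega)\,e_i(x).
\]
Using the uniform bound $\|e_i\|_\infty\leq C$ (explicitly established for the Gaussian kernel above and, as the excerpt asserts, available in general for $Q\in L_1^+(H)$), one has $|a(x,\omega)|\leq K(\omega)$ for every $x$, where
\[
K(\omega) := |\mu| + C\sum_{i\in\N}\sqrt{\lambda_i}\,|\beta_i(\omega)|.
\]
The hypothesis $\sum_i\sqrt{\lambda_i}<\infty$ is precisely what makes this random constant square-integrable.

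To verify $K\in L^2(\Omega)$, recall that $\E|\beta_i|=\sqrt{2/\pi}$ and $\E\beta_i^2=1$, and that the $\beta_i$ are independent. Expanding the square of the partial sum and passing to the limit by monotone convergence yields
\[
\E\Bigl[\bigl(\sum_{i\in\N}\sqrt{\lambda_i}\,|\beta_i|\bigr)^2\Bigr] = \bigl(1-\tfrac{2}{\pi}\bigr)\sum_{i\in\N}\lambda_i + \tfrac{2}{\pi}\bigl(\sum_{i\in\N}\sqrt{\lambda_i}\bigr)^2,
\]
which is finite because $\sum_i\sqrt{\lambda_i}<\infty$ forces $\sqrt{\lambda_i}$ bounded and hence $\sum_i\lambda_i\leq(\sup_i\sqrt{\lambda_i})\sum_i\sqrt{\lambda_i}<\infty$. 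In particular, $K(\omega)<\infty$ for almost every $\omega$, and for those $\omega$ the Weierstrass $M$-test shows the series defining $a(\cdot,\omega)$ converges uniformly on $\R$, so $a(\cdot,\omega)$ is continuous.

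For such $\omega$, equation~\eqref{eq:SDE} is a Volterra integral equation with bounded continuous right-hand side, and Peano's theorem provides a continuous solution $t\mapsto X(t,\omega)$ on $[0,T]$. The integral form immediately yields
\[
|X(t,\omega)|\leq |X(0,\omega)| + \int_0^t |a(X(s,\omega),\omega)|\,ds \leq |X(0,\omega)| + T\,K(\omega),
\]
so $\sup_{t\in[0,T]}|X(t,\omega)|\leq |X(0,\omega)|+T\,K(\omega)$. Taking $L^2(\Omega)$-norms and combining with the assumption $X(0)\in L^2(\Omega;\R)$ gives
\[
\E\Bigl[\sup_{t\in[0,T]}|X(t)|^2\Bigr] \leq 2\,\E|X(0)|^2 + 2T^2\,\E K^2 < +\infty,
\]
while continuity of $t\mapsto X(t,\omega)$ in $C([0,T];\R)$ is automatic from the integral representation and boundedness of the integrand. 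These two facts together say exactly that $X\in L^2(\Omega;C([0,T];\R))$.

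The main obstacle I anticipate is that Peano's theorem supplies existence but not uniqueness of ``the'' solution referred to in the statement. Upgrading to uniqueness requires Lipschitz continuity of $a(\cdot,\omega)$, which for the Gaussian-kernel example follows because the derivatives of the Hermite-weighted eigenfunctions admit a geometrically decaying bound analogous to the one computed for $f_i$ itself, so $\sum_i\sqrt{\lambda_i}<\infty$ also controls the pathwise Lipschitz constant. For a generic $Q\in L_1^+(H)$ one would need an additional regularity assumption on the eigenfunctions to obtain uniqueness; in the absence of such an assumption, the estimate above nevertheless applies uniformly to any Peano selection, which is all that is needed for the $L^2(\Omega;C([0,T];\R))$ membership claimed.
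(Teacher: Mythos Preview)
Your argument is correct and follows the same overall strategy as the paper: exploit the uniform eigenfunction bound $|e_i|\le C$ together with $\sum_i\sqrt{\lambda_i}<\infty$ to dominate $|a(X(s),\omega)|$ pathwise, integrate over $[0,T]$, and take the $L^2(\Omega)$-norm. The technical execution differs slightly. The paper squares the series and applies Cauchy--Schwarz in the form $\bigl|\sum_i\sqrt{\lambda_i}\,\beta_i e_i\bigr|^2\le\bigl(\sum_i\sqrt{\lambda_i}\,\beta_i^2\bigr)\bigl(\sum_i\sqrt{\lambda_i}\,|e_i|^2\bigr)$, then uses $\E\beta_i^2=1$ to arrive directly at $C(T)\bigl(\sum_i\sqrt{\lambda_i}\bigr)^2$; you instead pass through the explicit random constant $K(\omega)=|\mu|+C\sum_i\sqrt{\lambda_i}\,|\beta_i(\omega)|$ and compute its second moment exactly. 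Both routes yield the same order of bound. Your version is in fact more complete than the paper's: you supply existence of a continuous solution via Peano and flag the uniqueness issue, whereas the paper silently assumes a solution exists and only bounds its norm. The Lipschitz remark at the end is a fair caveat, though for the purpose of the lemma as stated (membership in $L^2(\Omega;C([0,T];\R))$ of \emph{any} solution) it is, as you note, not strictly needed.
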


\begin{proof}
We have by the definition of the norm of $L^2(\Omega;C([0,T];\R))$
\begin{align*}
 \|X\|_{L^2(\Omega;C([0,T];\R))}^2 &:= \E(\sup_{t\in[0,T]}|X(t)|^2)\\
 &\leq C \big(\E|X(0)|^2 + \E(\sup_{t\in[0,T]}|\int_0^t a(X(s))\,ds|^2)\big)\\
 &\leq C \big(\E|X(0)|^2 + \mu^2 T + \E(\sup_{t\in[0,T]}\int_0^t |\sum_{i\in\N} \sqrt{\lambda_i}\beta_i e_i(X(s))|^2\,ds)\big).
\end{align*}
The last term is further bounded by
\begin{align*}
 \E(\sup_{t\in[0,T]}\int_0^t &|\sum_{i\in\N} \sqrt{\lambda_i}\beta_i e_i(X(s))|^2\,ds)\\
 &\leq \E(\sup_{t\in[0,T]}\int_0^t \sum_{i\in\N} \sqrt{\lambda_i}\beta_i^2 \sum_{i\in\N} \sqrt{\lambda_i}|e_i(X(s))|^2\,ds)\\
 &\leq \E(\sum_{i\in\N} \sqrt{\lambda_i}\beta_i^2 \sum_{i\in\N} \sqrt{\lambda_i}\sup_{t\in[0,T]}\int_0^t |e_i(X(s))|^2\,ds)\\
 &\leq \sum_{i\in\N} \sqrt{\lambda_i} \E(\beta_i^2) \sum_{i\in\N} \sqrt{\lambda_i}C(T)\\
 &\leq C(T) (\sum_{i\in\N} \sqrt{\lambda_i})^2,
\end{align*}
where we used the Cauchy--Schwarz inequality, the bound on the eigenfunctions and that the inpendent random variables $\beta_i$ are standard normally distributed, for $i\in\N$. So overall we have the bound
\begin{equation*}
 \|X\|_{L^2(\Omega;C([0,T];\R))}^2 \leq C(T) \big(\E|X(0)|^2 + \mu^2 + (\sum_{i\in\N} \sqrt{\lambda_i})^2\big).
\end{equation*}

\end{proof}

% By definition the sequence $(f_i, i\in\N)\in \ell^1$, i.e. is summable. We can find a bound
% \begin{equation*}
%     \|f_i(z)\| \leq \alpha_i \|z\|,
% \end{equation*}
% such that we have with Gronwall's inequality
% \begin{align*}
%     \|X(t)\| &\leq \|X(0)\| + \mu t + \sum_{i\in\N} \beta_i(\omega) \int_0^t \|f_i(X(s))\|\,ds\\
%     &\leq \|X(0)\| + \mu t + \sum_{i\in\N} \beta_i(\omega)\alpha_i \int_0^t \|X(s)\|\,ds\\
%     &\leq (\|X(0)\| + \mu t) \e^{t\sum_{i\in\N}\beta_i(\omega)\alpha_i}.
% \end{align*}
% Therefore, if $\|X(0)\|<\infty$ we have a solution of the ordinary differential equation, given in equation~\eqref{eq:char_curve_ode}.
However, this is not a constructive approach to a solution, albeit it justifies the use of a Monte Carlo method. Since we are not aware of any results on closed form solutions we consider numerical approximations in the next section. 
The (additional) assumption that the sequence $(\sqrt{\lambda_i},i\in\N)$ is summable is for many common covariance kernels fulfilled. In particular, the example of the Gaussian covariance kernel has exponentially decaying eigenvalues. We remark further that this also holds for a L\'evy random field as defined in Equation~\eqref{eq:KL_expansion_Levy}. 

\subsection{Discretizations of space-dependent uncertainty.}
As in the time-dependent case, we employ a Monte Carlo based method for the approximation of the (moments of the) solution to Equation~\eqref{eq:problemSpace}. Using the same notation as in Section~\ref{sec:firstorderTime}, we start by describing a first order base scheme for each realization of the random field in Equation~\eqref{eq:problemSpace}.
Again, to avoid numerical artifacts from the boundary, we use periodic boundary conditions for the random field
\begin{equation}\label{eq:problembcSpace}
    a(x_L,\omega) = a(x_R,\omega), \quad \omega \in \Omega.
\end{equation}
and the functions $u$ and $g$, see Equation~\eqref{eq:problembc}.
\begin{figure}[htbp]
    \includegraphics[width=1.\linewidth,type=png,ext=.png,read=.png]{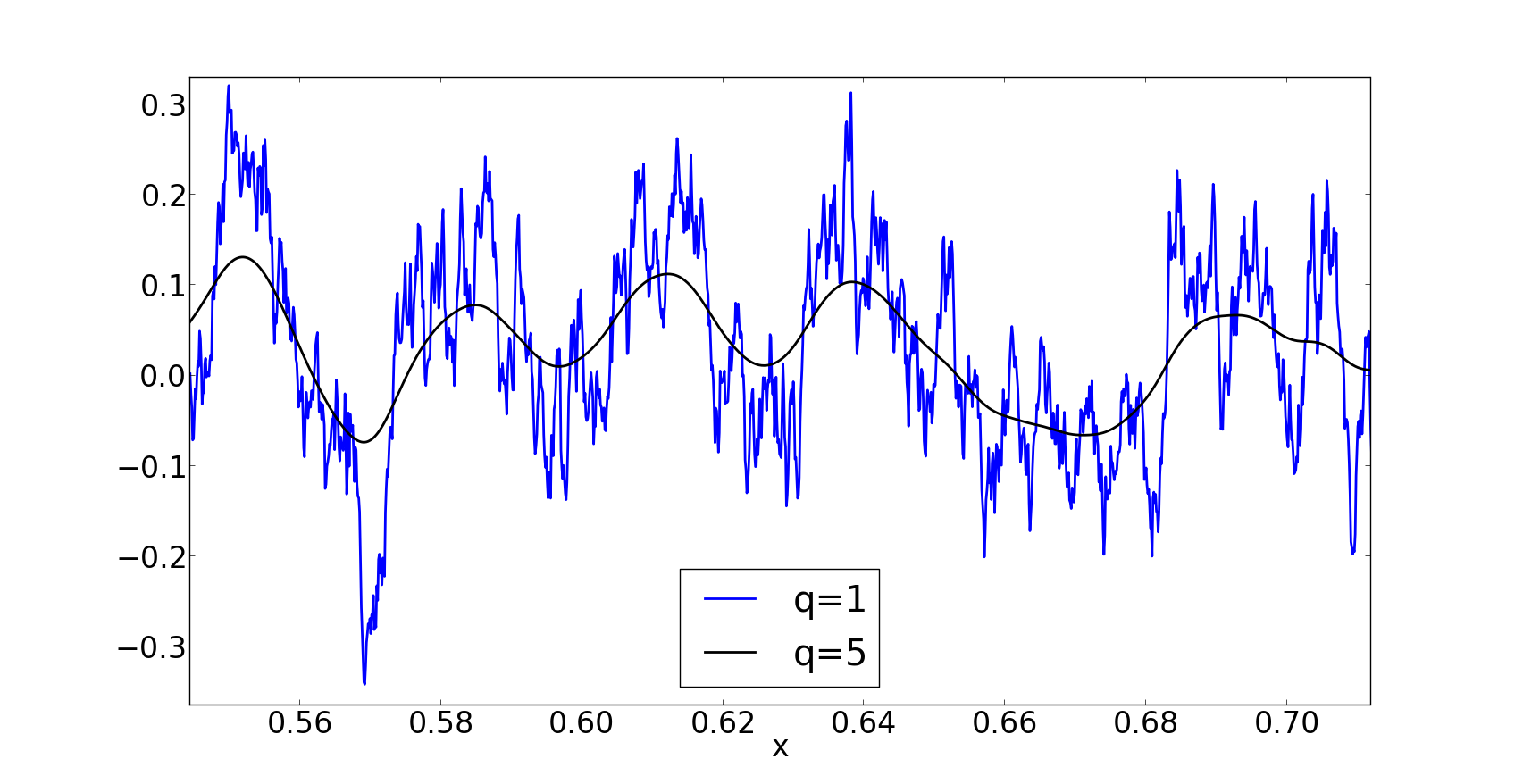}
    \caption{Comparison of correlated random fields for $q=1$ and $q=5$, generated using algorithm~\ref{algorithmSpace} with 8192 points in $[0,1]$. The larger $q$ the stronger the correlation and therefore the less oscillatory the random field.}
    \protect \label{fig:comparison}
\end{figure}
Using periodicity we define the Gaussian random field $a$ in the following manner. Let $W$ be a Gaussian white noise random field on $\R$ and $\gamma(\xi)$, for $\gamma:\R\rightarrow\R_+$ an even and positive function. Then, we set for any $\mu,\sigma \in \R$ with $\sigma\geq 0$,
\begin{equation}\label{eq:correlatedRF}
    a(x) = \mu + \sqrt{\sigma}(\mathcal{F}^{-1}\sqrt{\gamma}\mathcal{F}W)(x),
\end{equation}
where $\mathcal{F}$ denotes the Fourier transform and $\mathcal{F}^{-1}$ its inverse. Then, since $W$ is centered Gaussian, so is $a-\mu$ and the covariance of $a-\mu$ is given by
\begin{equation}
    \E\big((a(x)-\mu)(a(y)-\mu)\big) = \int_\R e^{-2\pi i p(x-y)}\sigma\gamma(p) dp,\quad x,y\in\R,
\end{equation}
where $i$ is the imaginary unit.
This approach leads to a fast simulation of Gaussian random fields. A typical family of functions for the Lebesgue density $\gamma$ is given by
\begin{equation}\label{eq:correlation}
    %\gamma(\xi) = (\theta + \xi^2)^{-q}, \quad \theta\geq 1, q\geq1, \ \theta\in\R, q\in\N.
    \gamma(\xi) = (1 + \xi^2)^{-q}, \quad q\in\N,\, q\geq1.
\end{equation}
The larger the parameter $q$ the higher the spacial correlation of the Gaussian random field $a$, see Figure~\ref{fig:comparison}.
In order to approximate the solution to Equation~\eqref{eq:problemSpace} we propose the following Monte Carlo based approach. It uses a fast approximation of the Gaussian random field $a$ as provided in~\cite{LP}. For each realization of the random field the discretization of Equation~\eqref{eq:problemSpace} is standard, see for instance \cite[Chapter 9]{LEV1}. For a first order scheme we introduce Algorithm~\ref{algorithmSpace}.
\begin{algorithm}[H]\caption{Space-dependent uncertainty (for python script see \cite{FGF})}\label{algorithmSpace}
    \begin{algorithmic}
        \REQUIRE $M\in\mathds{N}$
        \FOR{each sample $j=0$ to $M-1$}
            \STATE Set $\Omega=50$
            \STATE Set $\xi_{i-1/2} = \begin{cases}
                (i-1)/\Omega,& \text{if } i\leq I/2+1\\
                (I-(i-1))/\Omega,& \text{else.}
                \end{cases}
                $
            \STATE Set $\gamma_{i-1/2} = (1+\xi_{i-1/2}^2)^{-q}/\Omega$, for $i=1,...,I$
            \STATE Calculate $a_{1/2,...,I-1/2} = \mu + \mathcal{F}^{-1}\left(\sqrt{\gamma_{1/2,...,I-1/2}}\,\mathcal{F}\left(Z_{1/2,...,I-1/2}\right)\right),$ where $Z_{i-1/2} = \sqrt{\sigma/\delta} Y_{i-1/2}, \delta = \Omega/I$, with $Y_{i-1/2} \sim \mathcal{N}(0,1)$
            \STATE Use periodic boundary conditions $a_{I+1/2} = a_{1/2}$
            \STATE $t\leftarrow 0$, $n\leftarrow 0$
            \STATE Initialize cell averages $u_i^0$ for each cell $[x_{i-1/2},x_{i+1/2}]$
            \STATE Set $\Dt = \Delta t = c \Dx/\text{max}_i(|a_{i+1/2}|)$, where $c$ is the Courant number
            \WHILE{$t< T$}
                \IF{$t+\Delta t>T$}
                    \STATE $\Delta t = T-t$ %if $t+\Delta t>T$
                \ENDIF
                \STATE With (periodic) boundary conditions for $u_i^n$ apply time step% (see~\cite{LEV1} chapter 9)
                \STATE $u_i^{n+1} =u_i^n - \Dt/\Dx \left(  \text{max}(a_{i-1/2},0)(u_i^n - u_{i-1}^n) 
                    + \text{min}(a_{i+1/2},0)(u_{i+1}^n - u_i^n)\right) $
                \STATE $t\leftarrow t + \Dt$
                \STATE $n\leftarrow n + 1$
            \ENDWHILE
        \ENDFOR
        %\RETURN $\frac{\theta}{M}$
    \end{algorithmic}
\end{algorithm}
As before, the second order (in space and time) accurate scheme requires two further ingredients in each time step: using a non-oscillatory second order reconstruction using limiters and the second order time stepping (see Section~\ref{sec:secondorder}).

\subsection{Simulation results of space-dependent uncertainty.}
Figure~\ref{fig:comparison} shows two realizations of the Gaussian random fields generated by Algorithm~\ref{algorithmSpace}. As expected the random field is less oscillatory for $q=5$ compared to $q=1$, since the correlation of the random field is much stronger.

We start by pointing out that the variance of $a$ in Equation~\eqref{eq:correlatedRF} is independent of $x$. Thus, it makes sense in the simulations to choose $\mu$ to be $\zeta$ standard deviations of $a-\mu$, that is
\begin{equation}
    \mu \approx \zeta \sqrt{\Var[\sqrt{\sigma}(\mathcal{F}^{-1}\sqrt{\gamma}\mathcal{F}W)(x)]}.
\protect \label{eq:spacemu}
\end{equation}
Since the generated Gaussian random field $a$ is normally distributed this means that the probability that $a<0$ is $\frac{(1-\text{erf}(\zeta/\sqrt{2}))}{2}$.

%%%%%%%%%%%%%%%%%%%%%%%%%%%%%%%%%%%%%%%%%%%
\begin{figure}[htbp]
\centering 
\begin{tabular}{lr}
\subfigure[Expectation, $q=1$]{                            
\includegraphics[width=0.49\linewidth,type=png,ext=.png,read=.png]{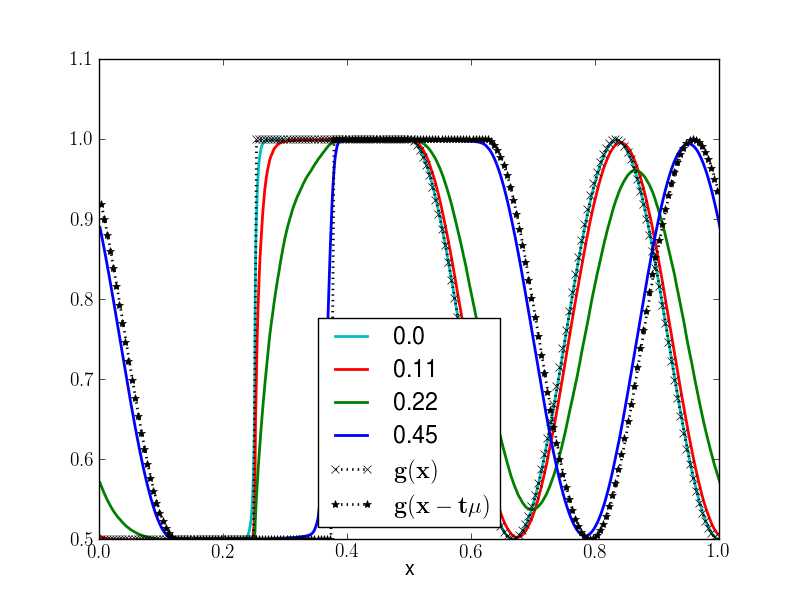}
}
& 
\subfigure[Variance, $q=1$]{
\includegraphics[width=0.49\linewidth,type=png,ext=.png,read=.png]{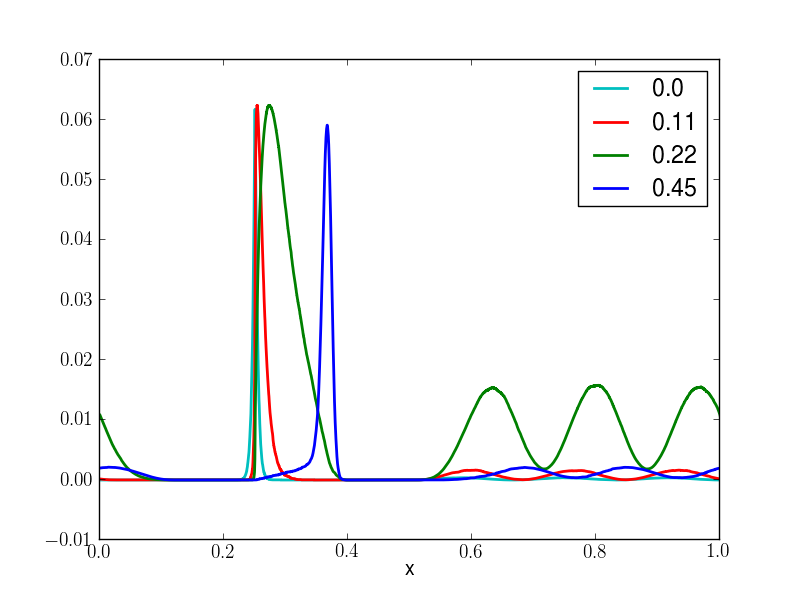}
}
\\                                                                                  
%\subfigure[Expectation,4096x100000 $q=5$.]{                 
%\includegraphics[width=.5\textwidth,type=png,ext=.png,read=.png]{linear_mu0.23_sigma10.0_4096x100000_explicit_upwind_order2_minmod_Voft0_Vofx1_corr1.0_p2_q5_dependence_E}
%}
%&
%\subfigure[Variance,4096x100000 $q=5$.]{                            
%\includegraphics[width=.5\textwidth,type=png,ext=.png,read=.png]{linear_mu0.23_sigma10.0_4096x100000_explicit_upwind_order2_minmod_Voft0_Vofx1_corr1.0_p2_q5_dependence_V}
%}
%\\                                                                                  
\subfigure[Expectation, $q=5$]{                 
\includegraphics[width=0.49\linewidth,type=png,ext=.png,read=.png]{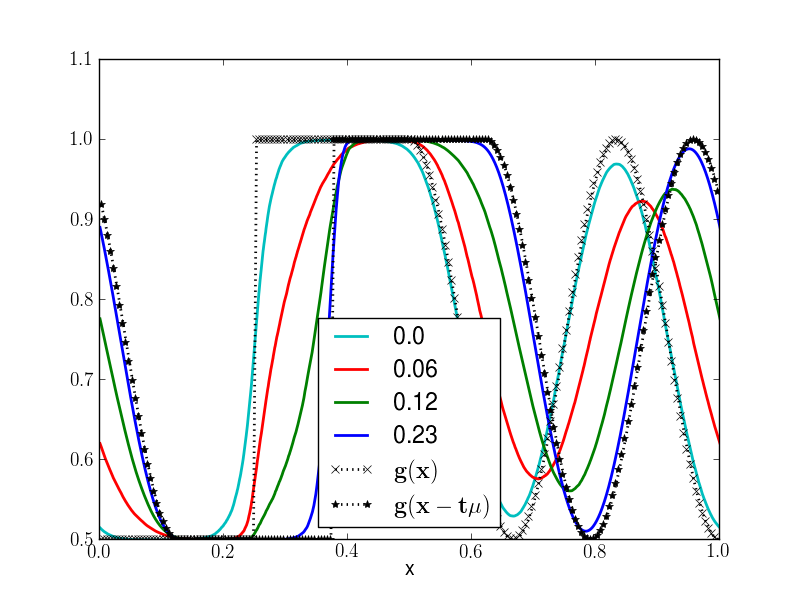}
}
&
\subfigure[Variance, $q=5$]{                            
\includegraphics[width=0.49\linewidth,type=png,ext=.png,read=.png]{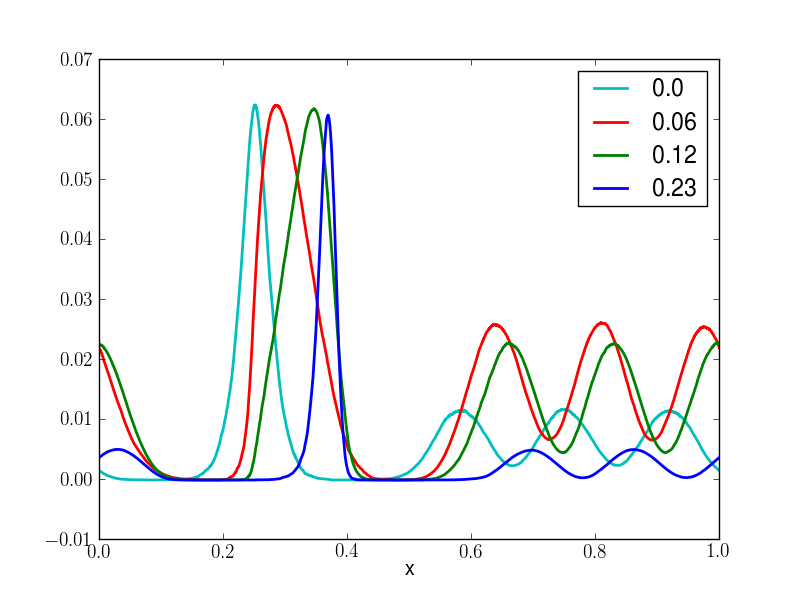}
}
\end{tabular}
\caption{Results using the second order minmod scheme for $\mu=0, \sigma=10, q=5$. If not noted otherwise, the expectation and variance are calculated with $10^4$ Monte Carlo samples and $2^{15}=32768$ mesh points.}
\protect \label{fig:dependencezeta}
\end{figure}

Figure~\ref{fig:dependencezeta} presents examples for $\zeta=0,1,2,4$, leading to the probability of $\approx50\%$, $16\%$, $2.3\%$ and $0.003\%$ negative values, respectively. This implies that the possibility for zero-crossings of $a$ varies with $\zeta$. Speaking in terms of the characteristic curves (see Equation~\eqref{eq:char_curve_ode}), such points will "trap" the solution at that point, and reduce the average propagation speed.

The solutions shown in Figure~\ref{fig:dependencezeta} were obtained using the second order minmod-based scheme described in Algorithm~\ref{algorithmSpace}, using $2^{15}=32768$ grid cells and $10^4$ Monte Carlo samples.
In order to be able to compare the dependence of $\zeta$ on the solution, we compute the solution up to time $t=c/\mu$ (for $\mu>0$) such that $x-t\mu=x-c$ is independent of $\mu$. For $\mu=0$ we choose $t=2$.

As can be seen in Figure~\ref{fig:dependencezeta} the expectation of the solution to Equation~\eqref{eq:problemSpace} depends heavily on $\zeta$. The larger $\zeta$ the more unlikely we get a zero-crossing of $a$ and therefore the average propagation speed is closer to the deterministic case. As expected, this effect is more pronounced for the less correlated Gaussian random field with $q=1$. For $\zeta=1$ the average propagation speed is almost reduced to zero.

In the extreme case when $\mu=0$, our numerical simulations suggest that the expectation of the solution $\E(u)$ is obtained by a convolution of the initial function $g$ with a Gaussian function. Figure~\eqref{fig:spacemu0q5}~(a) presents $g$ and $\E(u)$ along with two sample solutions, obtained by a second order minmod-based scheme described in Algorithm~\ref{algorithmSpace}, using $2^{13}=8192$ grid cells and $10^5$ Monte Carlo samples. In Figure~\ref{fig:spacemu0q5}(b) we can see that the bulk of the variance is located around the initial discontinuity. Unfortunately, even in this simple case we were not able to derive a closed-form solution. Based on our experiments we claim that the parameters of the aforementioned Gaussian function depend intricately on the first and second moment of the Gaussian random field $a$.

%%%%%%%%%%%%%%%%%%%%%%%%%%%%%
%%%%%%%%%%%%%%%%%%%%%%%%%%%%%
%%%%%%%%%%%%%%%%%%%%%%%%%%%%%
\begin{figure}[htbp]
\centering 
\begin{tabular}{lr}
\subfigure[Expectation and 2 sample solutions]{                            
\includegraphics[width=0.49\linewidth,type=png,ext=.png,read=.png]{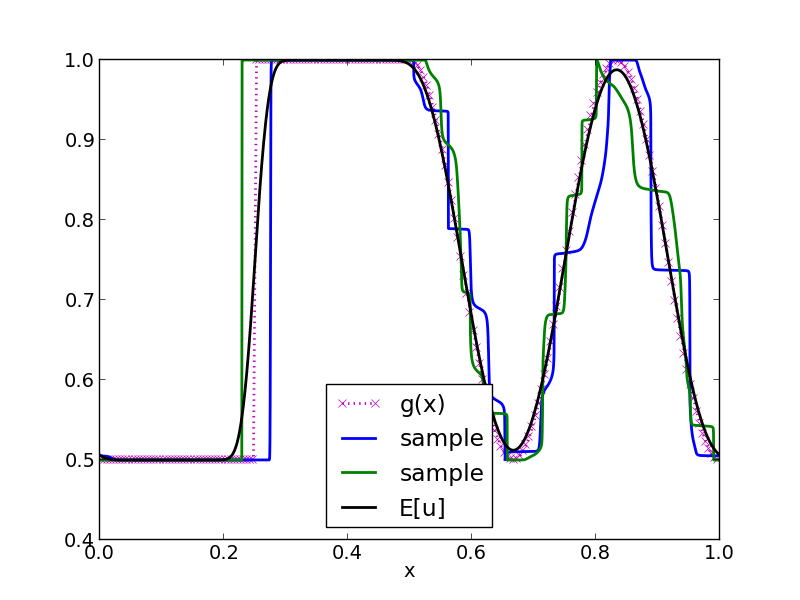}
} &
\subfigure[Variance]{
\includegraphics[width=0.49\linewidth,type=png,ext=.png,read=.png]{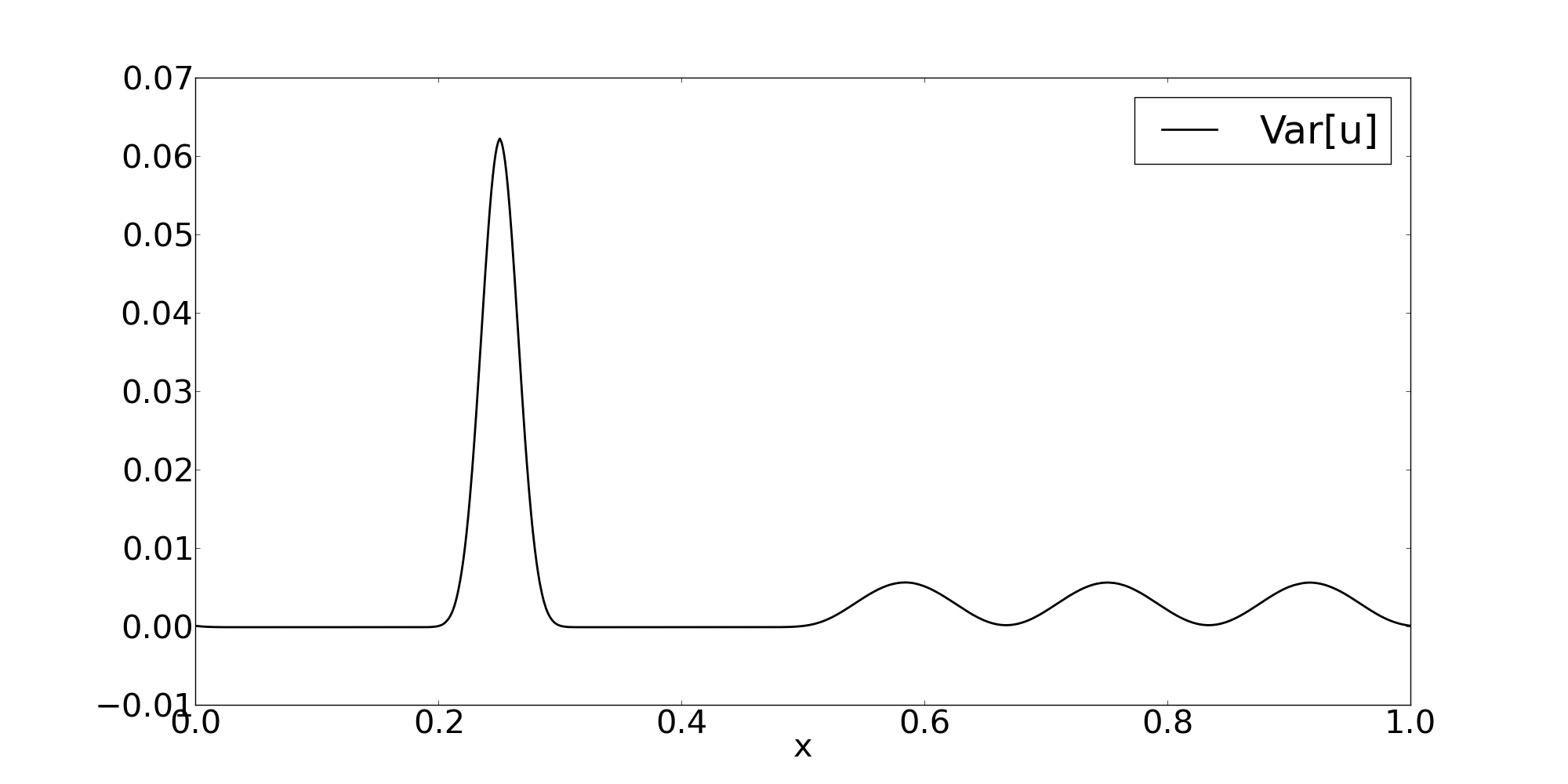}
}
\end{tabular}
\caption{Results using the second order minmod scheme for $\mu=0, \sigma=10, q=5$. If not noted otherwise, the expectation and variance are calculated with $10^5$ Monte Carlo samples and $2^{13}=8192$ mesh points.}
\protect \label{fig:spacemu0q5}
\end{figure}
Finally, we present a convergence study for the second order minmod scheme described in Algorithm~\ref{algorithmSpace}. Figure~\ref{fig:spaceconvergence} shows the first two moments for $\zeta=2$. As we quadruple the number of points several times starting with $1024$ points we can see that both expectation and variance of the solution converge. Compared to $q=5$ we need more points for $q=1$ in order to have a good approximation to the underlying random field, since it is less correlated. But even in this case the moments of the solution converge. We would like to comment, that the simulation with $65536$ points with $10000$ Monte Carlo samples took two weeks running simultaneously on $10$ cores, so it would have taken roughly $5$ months on a single CPU. 
%%%%%%%%%%%%%%%%%%%%%%%%%%%%%
%%%%%%%%%%%%%%%%%%%%%%%%%%%%%
%%%%%%%%%%%%%%%%%%%%%%%%%%%%%
\begin{figure}[htbp]
\centering 
\begin{tabular}{lr}
\subfigure[Expectation, $q=5$]{                            
\includegraphics[width=0.49\linewidth,type=png,ext=.png,read=.png]{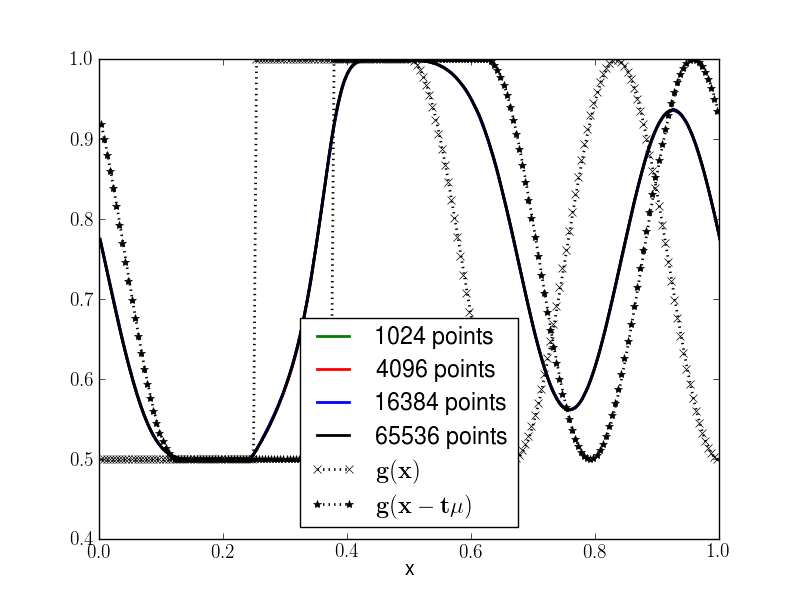}
}&
\subfigure[Variance, $q=5$]{
\includegraphics[width=0.49\linewidth,type=png,ext=.png,read=.png]{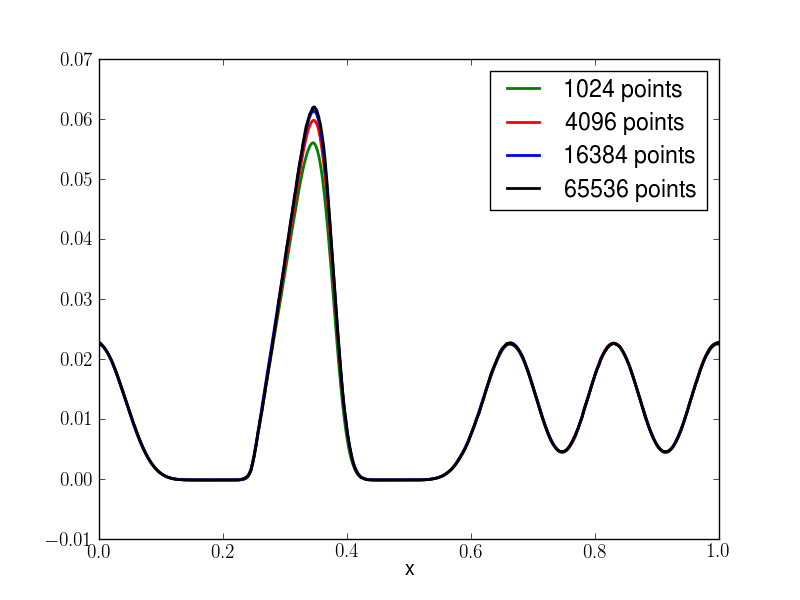}
}
\\
\subfigure[Expectation, $q=1$]{
\includegraphics[width=0.49\linewidth,type=png,ext=.png,read=.png]{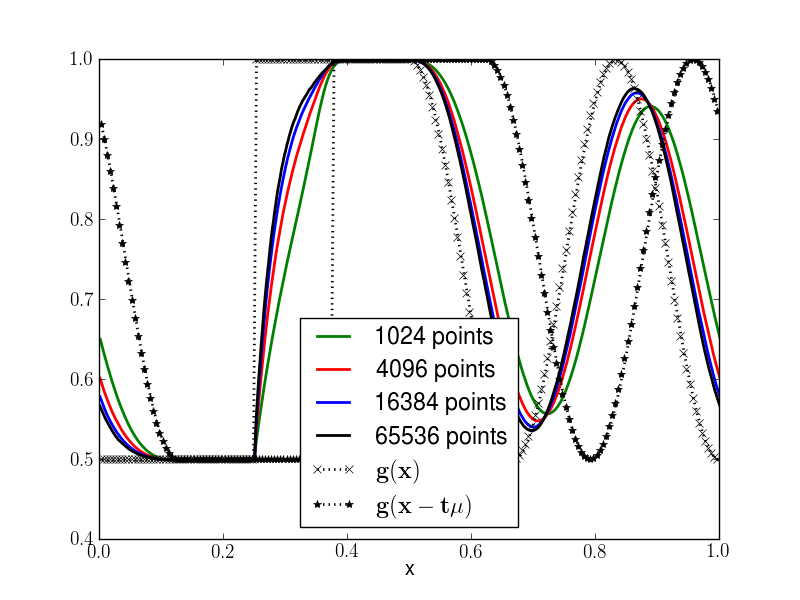}
}&
\subfigure[Variance, $q=1$]{
\includegraphics[width=0.49\linewidth,type=png,ext=.png,read=.png]{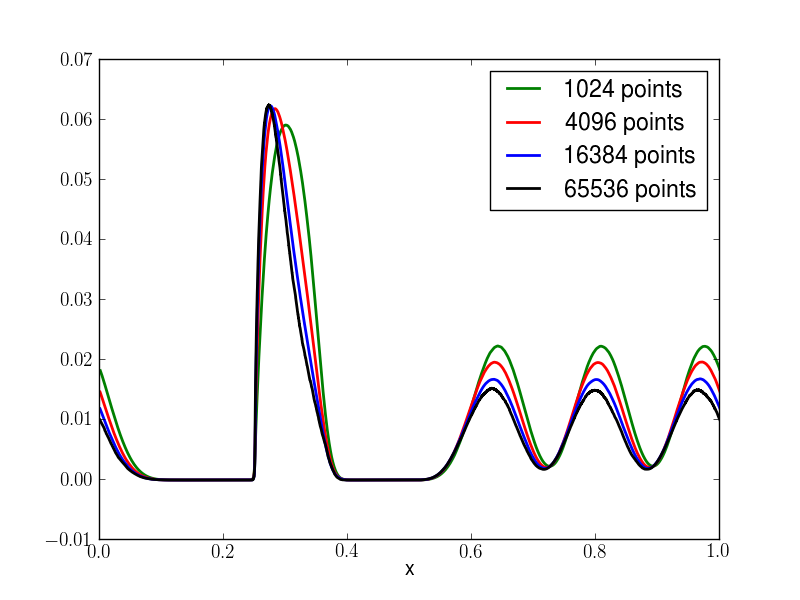}
}
\end{tabular}
\caption{Self-convergence study for second order minmod scheme with $M=10^4$ Monte Carlo samples for equation~\eqref{eq:spacemu} with $\zeta=2$.}
\protect \label{fig:spaceconvergence}
\end{figure}

As pointed out, there seem to be no closed-form solutions for the space-dependent uncertainty case. However, our numerical simulations indicate that uncertainty has a diffusive effect on $\E[u]$, similar to Problem~\ref{eq:problemTime}.
Furthermore, simulations suggest that the average propagation speed is affected by the stochastic term, that is the advection speed differs from the mean $\mu$ of the random field, given in Equation~\eqref{eq:correlatedRF}.

\section{Conclusion}
We have investigated numerical schemes for the approximation of the first and second moment of the solution of a hyperbolic problem with stochastic coefficients. We investigated the cases where the coefficient is given by a Gaussian process and a Gaussian/L\'evy random field. We introduced an adaptive scheme for the time-dependent problem which takes into account the special features of the Ornstein--Uhlenbeck process. Further, we gave closed form solutions for the (moments of the) distribution of the solution in the time-dependent case. We investigated the characteristic curves of the space-dependent problem where the stochastic coefficient is modeled by a Gaussian or L\'evy random field. We showed that the characteristic curves have finite variance. In the simulations, we put emphasize on the dependency of the correlation, mean and variance. 

We presented Monte Carlo based approximations for the distribution of the solutions to the stochastic partial differential equations for both the time- and the space-dependent case. We presented error plots showing convergence when applicable or showed self-convergence.
Naturally, the Monte Carlo approach could be extended to computationally advantageous multilevel methods, see for instance~\cite{BL12, MSS}.
For the space-dependent case, the numerical experiments suggest that the average speed of propagation intricately depends on the underlying Gaussian random field.

Finally, since we believe in reproducible science, the python scripts used to create the results in this paper are available at~\cite{FGF}.

\section*{Acknowledgement}
The authors would like to express their gratitude towards the University of Oslo, particularly the Center of Mathematics for Applications (CMA), the Eidgen\"ossische Technische Hochschule Z\"urich (ETH) and SINTEF ICT Oslo.

\bibliographystyle{siam}
\bibliography{uncertainty_barth_fuchs}

\begin{thebibliography}{10}

\bibitem{BF16}
{\sc A.~Barth and F.~G. Fuchs}, {\em Uncertainty quantification for hyperbolic
  conservation laws with flux coefficients given by spatiotemporal random
  fields}, SIAM J. Sci. Comput., 38 (2016), pp.~A2209--A2231.

\bibitem{BL12}
{\sc A.~Barth and A.~Lang}, {\em Multilevel {M}onte {C}arlo method with
  applications to stochastic partial differential equations}, Int. J. Comput.
  Math., 89 (2012), pp.~2479--2498.

\bibitem{CD07}
{\sc M.~C.~C. Cunha and F.~A. Dorini}, {\em A numerical scheme for the variance
  of the solution of the random transport equation}, Appl. Math. Comput., 190
  (2007), pp.~362--369.

\bibitem{DC07}
{\sc F.~A. Dorini and M.~C.~C. Cunha}, {\em A finite volume method for the mean
  of the solution of the random transport equation}, Appl. Math. Comput., 187
  (2007), pp.~912--921.

\bibitem{DC08}
\leavevmode\vrule height 2pt depth -1.6pt width 23pt, {\em Statistical moments
  of the random linear transport equation}, J. Comput. Phys., 227 (2008),
  pp.~8541--8550.

\bibitem{DC11}
\leavevmode\vrule height 2pt depth -1.6pt width 23pt, {\em On the linear
  advection equation subject to random velocity fields.}, Mathematics and
  Computers in Simulation, 82 (2011), pp.~679--690.

\bibitem{DFC09}
{\sc F.~A. Dorini, F.~Furtado, and M.~C.~C. Cunha}, {\em On the evaluation of
  moments for solute transport by random velocity fields}, Appl. Numer. Math.,
  59 (2009), pp.~2994--2998.

\bibitem{F11}
{\sc G.~E. Fasshauer}, {\em Positive definite kernels: past, present and
  future}, Dolomites Research Notes on Approximation, 4 (2011), p.~21–63.

\bibitem{FGF}
{\sc F.~G. Fuchs}, {\em Github repository providing python scripts for
  simulating scalar linear hyperbolic equations with random field coefficients
  (available upon publication)}.

\bibitem{GX08}
{\sc D.~Gottlieb and D.~Xiu}, {\em Galerkin method for wave equations with
  uncertain coefficients}, Commun. Comput. Phys., 3 (2008), p.~505–518.

\bibitem{GST1}
{\sc S.~Gottlieb, C.-W. Shu, and E.~Tadmor}, {\em Strong stability-preserving
  high-order time discretization methods}, SIAM Rev., 43 (2001), pp.~89--112
  (electronic).

\bibitem{HEOC1}
{\sc A.~Harten, B.~Engquist, S.~Osher, and S.~R. Chakravarthy}, {\em Uniformly
  high-order accurate essentially nonoscillatory schemes. {III}}, J. Comput.
  Phys., 71 (1987), pp.~231--303.

\bibitem{JSK02}
{\sc M.~Jardak, C.-H. Su, and G.~E. Karniadakis}, {\em Spectral polynomial
  chaos solutions of the stochastic advection equation}, in Proceedings of the
  {F}ifth {I}nternational {C}onference on {S}pectral and {H}igh {O}rder
  {M}ethods ({ICOSAHOM}-01) ({U}ppsala), vol.~17, 2002, pp.~319--338.

\bibitem{LP}
{\sc A.~Lang and J.~Potthoff}, {\em Fast simulation of {G}aussian random
  fields}, Monte Carlo Methods Appl., 17 (2011), pp.~195--214.

\bibitem{LEV1}
{\sc R.~J. LeVeque}, {\em Finite volume methods for hyperbolic problems},
  Cambridge Texts in Applied Mathematics, Cambridge University Press,
  Cambridge, 2002.

\bibitem{MSS}
{\sc S.~Mishra, C.~Schwab, and J.~{\v{S}}ukys}, {\em Multi-level monte carlo
  finite volume methods for uncertainty quantification in nonlinear systems of
  balance laws}, in Uncertainty Quantification in Computational Fluid Dynamics,
  Springer, 2013, pp.~225--294.

\bibitem{OL}
{\sc H.~Osnes and H.~P. Langtangen}, {\em A study of some finite difference
  schemes for a unidirectional stochastic transport equation}, SIAM J. Sci.
  Comput., 19 (1998), pp.~799--812 (electronic).

\bibitem{SO1}
{\sc C.-W. Shu and S.~Osher}, {\em Efficient implementation of essentially
  nonoscillatory shock-capturing schemes. {II}}, J. Comput. Phys., 83 (1989),
  pp.~32--78.

\bibitem{TZ10}
{\sc T.~Tang and T.~Zhou}, {\em Convergence analysis for stochastic collocation
  methods to scalar hyperbolic equations with a random wave speed}, Commun.
  Comput. Phys., 8 (2010), p.~226–248.

\bibitem{VL1}
{\sc B.~van Leer}, {\em Towards the ultimate conservative difference scheme.
  {V}. {A} second-order sequel to {G}odunov's method}, J. Comput. Phys., 135
  (1997), pp.~227--248.
\newblock With an introduction by Ch. Hirsch, Commemoration of the 30th
  anniversary.

\end{thebibliography}
\end{document}